\tikzstyle{point}=[circle, inner sep = 2pt, outer sep = 1pt, minimum size = 5pt, fill=black, draw=black]
\renewcommand{\dim}{\operatorname{dim}}
\renewcommand{\Im}{\operatorname{Im}}
\newcommand{\rank}{\operatorname{rank}}
\newcommand{\Dgm}{\operatorname{Dgm}}
\newcommand{\Dir}{\operatorname{Dir}}
\newcommand{\dis}{\operatorname{dis}}
\newcommand{\codis}{\operatorname{codis}}
\newcommand{\CD}{\operatorname{CD}}
\newcommand{\Pers}{\operatorname{Pers}}
\newtheorem{theorem}{Theorem}[section]
\newtheorem{lemma}[theorem]{Lemma}
\newtheorem{corollary}[theorem]{Corollary}
\newtheorem{proposition}[theorem]{Proposition}
\theoremstyle{definition}
	\theoremstyle{definition}
	\newtheorem{rem&notation}[theorem]{Remark and notation}
\newtheorem{remark}[theorem]{Remark}
\newtheorem{example}[theorem]{Example}
\theoremstyle{definition}
\newtheorem{definition}[theorem]{Definition}
\numberwithin{equation}{section}
\title{A directed persistent homology theory for dissimilarity functions}
\author{David M\'endez}
\address[D.\ M\'endez]{School of Mathematical Sciences, University of Southampton, SO17 1BJ, United Kingdom \\ 
Current affiliation: Champalimaud Centre for the Unknown, Av. Bras\'ilia s/n, 1400-038 Lisboa, Portugal}
\email{david.mendez@research.fchampalimaud.org}
\author{Rub\'en J. S\'anchez-Garc\'{i}a}
\address[R.\ S\'anchez-Garc\'{i}a]{School of Mathematical Sciences, University of Southampton, SO17 1BJ, United Kingdom}
\email{R.Sanchez-Garcia@soton.ac.uk}
\newcommand*{\relrelbarsep}{.386ex}
\newcommand*{\relrelbar}{%
  \mathrel{%
    \mathpalette\@relrelbar\relrelbarsep
  }%
}
\newcommand*{\@relrelbar}[2]{%
  \raise#2\hbox to 0pt{$\m@th#1\relbar$\hss}%
  \lower#2\hbox{$\m@th#1\relbar$}%
}
\providecommand*{\rightrightarrowsfill@}{%
  \arrowfill@\relrelbar\relrelbar\rightrightarrows
}
\providecommand*{\leftleftarrowsfill@}{%
  \arrowfill@\leftleftarrows\relrelbar\relrelbar
}
\providecommand*{\xrightrightarrows}[2][]{%
  \ext@arrow 0359\rightrightarrowsfill@{#1}{#2}%
}
\providecommand*{\xleftleftarrows}[2][]{%
  \ext@arrow 3095\leftleftarrowsfill@{#1}{#2}%
}
\begin{document}

\keywords{Persistent homology, dissimilarity functions, directed simplicial complexes, directed cycles}

\begin{abstract}
    We develop a theory of persistent homology for directed simplicial complexes which detects persistent directed cycles in odd dimensions.  We relate directed persistent homology to classical persistent homology, prove some stability results, and discuss the computational challenges of our approach. Our directed persistent homology theory is motivated by homology with semiring coefficients: by explicitly removing additive inverses, we are able to detect directed cycles algebraically.
\end{abstract}

\maketitle

\section{Introduction}
Persistent homology is one of the most successful tools in Topological Data Analysis \cite{Car09}, with recent applications in numerous scientific domains such as biology, medicine, neuroscience, robotics, and many others \cite{OttPorTil+17}. In its most common implementation, persistent homology is used to infer topological properties of the metric space underlying a finite point cloud using two steps \cite{EdeHar10}:

\begin{enumerate}
    \item Build a filtration of simplicial complexes from distances, or similarities, between data points.
    
    \item Compute the singular homology of each of the simplicial complexes in the filtration, along with the linear maps induced in homology by their inclusions. The resulting \emph{persistence module} can normally be represented using a \emph{persistence diagram} or a \emph{persistence barcode}.
\end{enumerate}
A fundamental limitation of persistent homology, and in fact homology, is its inability to incorporate directionality, which can be important in some real-world applications (see examples below). For instance, homology cannot in principle distinguish between directed and undirected cycles (Figure \ref{figure:directedCycles}). Although the definition of homology, namely the differential or boundary operator, requires a choice of orientation for the simplices, the resulting homology is independent of this choice. Previous attempts have had some partial success at this issue (see `Related work' below), however, as far as we know, there is no homology theory able to exactly detect directed cycles up to boundary equivalence. 

The main difficulty occurs at the algebraic level: opposite orientations on a simplex correspond to additive inverse elements in the coefficient ring or field. Our key insight is to retain the submodule of homology generated by those classes that only have non-negative coefficients on elementary chains. In this way, we can define a directed homology module (Def.~\ref{definition:directed-homology}) of directed simplicial complexes (Def.~\ref{definition:directedSimplicialComplex}) that detects homology classes of directed 1-cycles, as desired  (Fig.~\ref{figure:directedCycles}, Prop.~\ref{proposition:homologyPolygons}).

This insight arises from the use of homology with semiring coefficients, where additive inverses may not exist. Indeed, the main results in this paper can be understood and reformulated in full generality using homology theory with semiring coefficients, see Appendix \ref{appendix:algebraicBackground}.

Our directed homology theory can be extended to the persistent setting. More concretely, we define two persistence modules associated to filtrations of directed simplicial complexes: an undirected one using the whole homology module (Def.~\ref{definition:undirPersistenceModFilt}) and a directed one using the submodule of directed homology (Def.~\ref{definition:dirPersistenceModFilt}). We show that the bars in the directed barcodes can be matched one-to-one with bars in the undirected barcodes, with the directed bars possibly having a later birth (Prop.~\ref{proposition:subbarcode}, Figs.~\ref{figure:severalAtOnce}, \ref{figure:filtrationOne}, \ref{figure:notEveryDirected}).
  
Our motivation to develop directed persistent homology is the applications to real-world data sets where asymmetry or directionality, in particular the presence of directed cycles, plays a fundamental role. Examples include biological neural networks (directed synaptic connections), time series data (directed temporal connections) or biological molecular networks such as protein-protein interaction networks (inhibitory and excitatory connections). Mathematically, we can model these situations using a finite set $V$ of data points, and an arbitrary function $d_V \colon V \times V \to \mathbb{R}$, which we call \emph{dissimilarity function} (Section \ref{section:introAsymmetricNetrowks}) and which, crucially, may not be symmetric. Our main example of a directed simplicial complex is the directed Rips complex (Definition \ref{definition:filtrationDisNet}) of $(V,d_V)$, which becomes the input of our suggested directed persistent homology pipeline. 

	\begin{figure}
		\begin{center}
			\begin{tikzpicture}
				\tikzset{node distance = 1.5cm, auto}
				\node[style=point,label={[label distance=0cm]270:$v_1$}](v1) {};
				\node[style=point, right of = v1, label={[label distance = 0cm]270:$v_2$}](v2) {};
				\node[style=point, above of = v1, xshift = 0.75cm, yshift = -0.2 cm, label={[label distance = 0cm]90:$v_3$}](v3) {};
				\draw[->, thick, >=stealth] (v1) to node{} (v2);
				\draw[->, thick, >=stealth] (v1) to node{} (v3);
				\draw[->, thick, >=stealth] (v2) to node{} (v3);
				\node[style=point, right of = v2, xshift = 1cm, label={[label distance = 0cm]270:$w_1$}](w1) {};
				\node[style=point, right of = w1, label={[label distance = 0cm]270:$w_2$}](w2) {};
				\node[style=point, above of = w1, xshift = 0.75cm, yshift = -0.2 cm, label={[label distance = 0cm]90:$w_3$}](w3) {};
				\draw[->, thick, >=stealth] (w1) to node{} (w2);
				\draw[->, thick, >=stealth] (w3) to node{} (w1);
				\draw[->, thick, >=stealth] (w2) to node{} (w3);
			\end{tikzpicture}
			\caption{Two examples of directed simplicial complexes $X$ (left) and $Y$ (right). Our directed submodules of homology over the rings $R\in\{\mathbb{Z},\mathbb{Q},\mathbb{R}\}$ detect directed 1-cycles: $H_1^{\Dir}(X,R)=\{0\}$, while $H_1^{\Dir}(Y,R)=R$ generated by the directed 1-cycle $[w_1,w_2] + [w_2,w_3] + [w_3,w_1]$.
			}
			\label{figure:directedCycles}
	\end{center}
\end{figure}
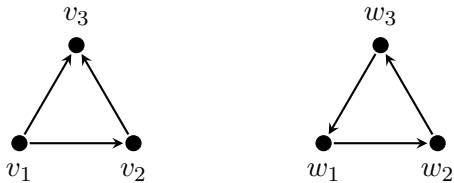

In the context of dissimilarity functions, we prove a stability result with respect to the bottleneck distance between (directed) persistent diagrams and the correspondence distortion distance (a generalisation of the Gromov-Hausdorff distance) between dissimilarity functions over finite sets (Thm.~\ref{theorem:stability}). We finish this article by discussing the computational challenges of our approach. 

\subsubsection*{Related work} A first approach to incorporating directionality would be to encode the asymmetry of a data set in the simplicial complex built from it. In \cite{Tur19}, the author uses \emph{ordered tuple complexes} or \emph{OT-complexes}, which are generalisations of simplicial complexes where simplices are ordered tuples of vertices. Similar ideas have been successfully used in the field of neuroscience to show the importance of \emph{directed cliques} of neurons \cite{ReiNolMar+17}, showing that directionality in neuron connectivity plays a crucial role in the structure and function of the brain. 
In \cite{ChoMem18}, the authors use so-called \emph{Dowker filtrations} to develop persistent homology for asymmetric networks, and note \cite[Remark 35]{ChoMem18} that a non-trivial 1-dimensional persistence diagram associated to Dowker filtrations suggests the presence of directed cycles. However, they also note that such persistence diagram may be non-trivial even if directed cycles are not present.
Further progress can be achieved by combining the ideas above with homology theories that are themselves sensitive to asymmetry. In \cite{ChoMem18b}, the authors develop persistent homology for directed networks using the theory of path homology of graphs \cite{GriLinMurYau12}. This persistent homology theory shows stability with respect to the distance between asymmetric networks \cite{CarMemRibSeg14}, and is indeed able to tell apart digraphs with isomorphic underlying graphs but different orientations on the edges. Nonetheless, cycles in path homology do not correspond to directed cycles, see \cite[Example 10]{ChoMem18b}.
Another significant contribution can be found in \cite{Tur19}, where four new approaches to persistent homology are developed for asymmetric data, all of which are shown to be stable. Each approach is sensitive to asymmetry in a different way. For example, one of them uses a generalisation of poset homology to preorders to detect strongly connected components in digraphs, a feature our implementation does not have (see Proposition \ref{proposition:H0andConnectivity}).
More interestingly for our purposes, in \cite[Section 5]{Tur19} the author introduces a persistent homology approach that builds a \emph{directed Rips filtration} of ordered tuple complexes associated to \emph{dissimilarity functions} $d_V\colon V\times V\to\mathbb{R}$. This is indeed sensitive to asymmetry and yields a persistent homology pipeline that is stable with respect to the correspondence distortion distance, a generalisation of the Gromov-Hausdorff distance to  dissimilarity functions (see \cite[Section 2]{Tur19} and Section \ref{section:introAsymmetricNetrowks}). However, undirected cycles are also detected by this homology theory. 
In contrast, in this article we produce a theory of persistent homology associated to ordered tuple complexes that only detects directed cycles and which is also stable with respect to the correspondence distortion distance.

\subsubsection*{Overview of results}

We introduce modules of undirected and directed homology associated to directed simplicial complexes (Definition \ref{definition:directedHomologyComplexes}) and show that they satisfy some desirable properties. Furthermore, we prove that for certain coefficient rings, closed paths will only have non-trivial directed homology in dimension $1$ when their edges form a directed cycle, as we set up to do (Proposition \ref{proposition:homologyPolygons}). Our modules of directed homology can be defined in dimensions greater than one, although they are zero in all even dimensions (Proposition \ref{proposition:evenDimTrivialHomology}) except in dimension 0 where they detect weakly connected components (Proposition \ref{proposition:H0andConnectivity}). 

Next, we extend our homology theory to the persistence setting. In particular, we introduce, for each dimension, two persistence modules associated to the same filtration: an \emph{undirected} persistence module (Definition \ref{definition:undirPersistenceModFilt}), which is analogous to the one introduced in \cite[Section 5]{Tur19}, and the submodule generated by directed classes, which we call the \emph{directed} persistence module (Definition \ref{definition:dirPersistenceModFilt}). The directed persistence module gives rise to bars associated to homology classes that can be represented by directed cycles, as illustrated in Fig.~\ref{figure:severalAtOnce} and Examples \ref{example:filtrationEquivalentCycles} to \ref{example:remainsUnmatched}.
We also establish a relation between the undirected and directed persistence barcodes of the same filtration. Indeed, in Proposition \ref{proposition:subbarcode} we show that every bar in the directed barcode corresponds to a unique bar in the undirected barcode that dies at the same time. The bar in the directed barcode may, however, be born later, and some bars in the undirected barcode may be left unmatched (see Figs.~ \ref{figure:severalAtOnce}, \ref{figure:filtrationOne} and \ref{figure:notEveryDirected}). 

\begin{figure}
    \centering
    \begin{tikzpicture}[mystyle/.style={draw,shape=circle,fill=blue}]
        \def\ngon{5}
        \node[regular polygon,regular polygon sides=\ngon,minimum size=2.2cm,shape border rotate=216] (p) {};
        \node[regular polygon,regular polygon sides=\ngon,minimum size=2.9cm,shape border rotate=216] (q) {};
        \foreach\x in {1,...,\ngon}{\node[point] (p\x) at (p.corner \x){};}
        \foreach\x in {1,...,\ngon}{\node (q\x) at (q.corner \x){$v_{\x}$};}
        \draw[->,thick,>=stealth] (p1) to node {} (p2);
        \draw[->,thick,>=stealth] (p2) to node {} (p4);
        \draw[->,thick,>=stealth] (p2) to node {} (p3);
        \node[below of = p3, yshift=-1.7cm] (text) {$X_0$};
    \end{tikzpicture}
    \qquad
    \begin{tikzpicture}[mystyle/.style={draw,shape=circle,fill=blue}]
        \def\ngon{5}
        \node[regular polygon,regular polygon sides=\ngon,minimum size=2.2cm,shape border rotate=216] (p) {};
        \node[regular polygon,regular polygon sides=\ngon,minimum size=2.9cm,shape border rotate=216] (q) {};
        \foreach\x in {1,...,\ngon}{\node[point] (p\x) at (p.corner \x){};}
        \foreach\x in {1,...,\ngon}{\node (q\x) at (q.corner \x){$v_{\x}$};}
        \draw[->,thick,>=stealth] (p1) to node {} (p2);
        \draw[->,thick,>=stealth] (p2) to node {} (p4);
        \draw[->,thick,>=stealth] (p2) to node {} (p3);
        \draw[->,thick,>=stealth] (p3) to node {} (p4);
        \draw[->,thick,>=stealth] (p3) to node {} (p5);
        \node[below of = p3, yshift=-1.7cm] (text) {$X_1$};
    \end{tikzpicture}
    \qquad
    \begin{tikzpicture}[mystyle/.style={draw,shape=circle,fill=blue}]
        \def\ngon{5}
        \node[regular polygon,regular polygon sides=\ngon,minimum size=2.2cm,shape border rotate=216] (p) {};
        \node[regular polygon,regular polygon sides=\ngon,minimum size=2.9cm,shape border rotate=216] (q) {};
        \foreach\x in {1,...,\ngon}{\node[point] (p\x) at (p.corner \x){};}
        \foreach\x in {1,...,\ngon}{\node (q\x) at (q.corner \x){$v_{\x}$};}
        \draw[->,thick,>=stealth] (p1) to node {} (p2);
        \draw[->,thick,>=stealth] (p2) to node {} (p4);
        \draw[->,thick,>=stealth] (p2) to node {} (p3);
        \draw[->,thick,>=stealth] (p3) to node {} (p4);
        \draw[->,thick,>=stealth] (p3) to node {} (p5);
        \draw[->,thick,>=stealth] (p4) to node {} (p5);
        \draw[->,thick,>=stealth] (p1) to node {} (p3);
        \node[below of = p3, yshift=-1.7cm] (text) {$X_2$};
    \end{tikzpicture}
    \qquad
    \begin{tikzpicture}[mystyle/.style={draw,shape=circle,fill=blue}]
        \def\ngon{5}
        \node[regular polygon,regular polygon sides=\ngon,minimum size=2.2cm,shape border rotate=216] (p) {};
        \node[regular polygon,regular polygon sides=\ngon,minimum size=2.9cm,shape border rotate=216] (q) {};
        \foreach\x in {1,...,\ngon}{\node[point] (p\x) at (p.corner \x){};}
        \foreach\x in {1,...,\ngon}{\node (q\x) at (q.corner \x){$v_{\x}$};}
        \draw[->,thick,>=stealth] (p1) to node {} (p2);
        \draw[->,thick,>=stealth] (p2) to node {} (p4);
        \draw[->,thick,>=stealth] (p2) to node {} (p3);
        \draw[->,thick,>=stealth] (p3) to node {} (p4);
        \draw[->,thick,>=stealth] (p3) to node {} (p5);
        \draw[->,thick,>=stealth] (p4) to node {} (p5);
        \draw[->,thick,>=stealth] (p1) to node {} (p3);
        \draw[->,thick,>=stealth] (p5) to node {} (p1);
        \node[below of = p3, yshift=-1.7cm] (text) {$X_3$};
    \end{tikzpicture}
    
    \bigskip
    
    \begin{tikzpicture}
        \node at (0,-0.3){0};
        \node at (1,-0.3){1};
        \node at (2,-0.3){2};
        \node at (3,-0.3){3};
        \draw[->, >=stealth] (0,0)--(4,0);
        \draw[-] (0,-0.07)--(0,0.07);
        \draw[-] (1,-0.07)--(1,0.07);
        \draw[-] (2,-0.07)--(2,0.07);
        \draw[-] (3,-0.07)--(3,0.07);
        \draw[->,>=stealth,very thick,color=blue](1,0.3)--(4,0.3);
        \draw[->,>=stealth,very thick,color=red](2,0.6)--(4,0.6);
        \draw[->,>=stealth,very thick,color=green](2,0.9)--(4,0.9);
        \draw[->,>=stealth,very thick,color=orange](3,1.2)--(4,1.2);
    \end{tikzpicture}
    \qquad\qquad
    \begin{tikzpicture}
        \node at (0,-0.3){0};
        \node at (1,-0.3){1};
        \node at (2,-0.3){2};
        \node at (3,-0.3){3};
        \draw[->, >=stealth] (0,0)--(4,0);
        \draw[-] (0,-0.07)--(0,0.07);
        \draw[-] (1,-0.07)--(1,0.07);
        \draw[-] (2,-0.07)--(2,0.07);
        \draw[-] (3,-0.07)--(3,0.07);
        \draw[->,>=stealth,very thick,color=blue](3,0.3)--(4,0.3);
        \draw[->,>=stealth,very thick,color=red](3,0.6)--(4,0.6);
        \draw[->,>=stealth,very thick,color=green](3,0.9)--(4,0.9);
        \draw[->,>=stealth,very thick,color=orange](3,1.2)--(4,1.2);
    \end{tikzpicture}
        
    \caption{A filtration of a directed simplicial complex (top, left to right) and corresponding undirected (bottom, left) and directed (bottom, right) persistence barcodes. Every bar in the directed barcode corresponds to a unique bar in the undirected barcode (shown here by matching colours).}
    \label{figure:severalAtOnce}
\end{figure}
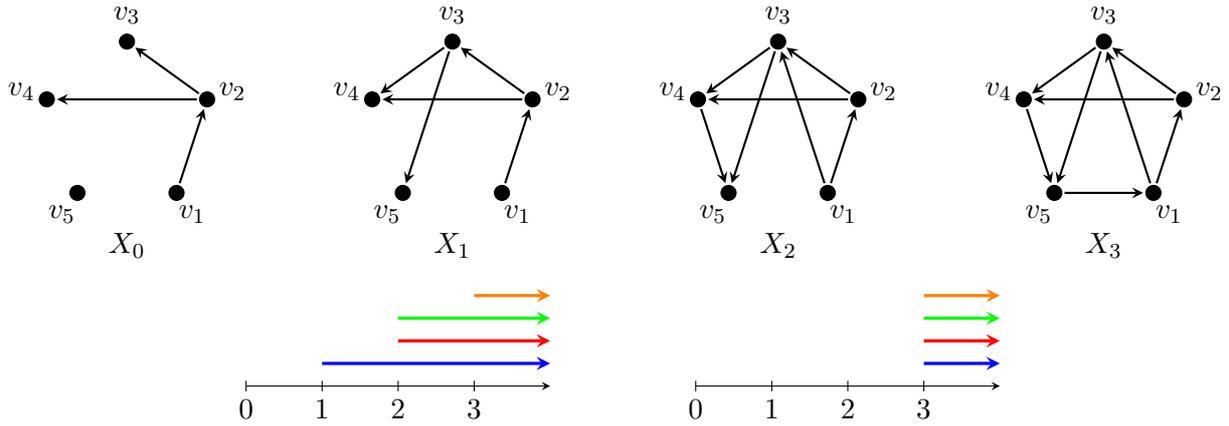


Having all the necessary ingredients, we can provide a complete workflow for directed persistent homology: given a dissimilarity function $d_V$ on a finite set $V$, we construct its directed Rips filtration (Definition \ref{definition:filtrationDisNet}), and take the corresponding undirected and directed $n$-dimensional persistence modules, respectively denoted $\mathcal{H}_n(V, d_V)$ and $\mathcal{H}_{n}^{\Dir}(V,d_V)$ (Definition \ref{definition:persistenceNetwork}). Both of these persistence modules have persistence diagrams and barcodes (Definition \ref{definition:directedPersBarcodes}) and the associated persistence modules are stable with respect to the correspondence distortion distance (Section \ref{section:introAsymmetricNetrowks} and Theorem \ref{theorem:stability}).

We finish the article with a discussion of the computational challenges of calculating the directed persistent homology of a dissimilarity function. The Standard Algorithm can easily be adapted to compute the barcodes of the undirected persistent homology of the dissimilarity function, but the computation of the directed persistence barcodes is more challenging. A direct approach results in an extreme ray enumeration problem (Section \ref{section:algorithms}), a well-known problem in computational geometry. Developing an algorithm that computes the directed homology classes, rather than all directed cycles, remains an open problem.

We include an Appendix on homology with semiring coefficients and on how it gives rise to the ideas introduced in this article. This provides additional motivation and context for our results, and presents them in full generality. 

\subsubsection*{Outline of the article} In Section \ref{section:introPersistentNetworks}, we introduce the necessary background in persistence modules (\ref{section:persistentHomologyIntro}) and dissimilarity functions (\ref{section:introAsymmetricNetrowks}). In Section \ref{section:homologySimplicialComplexes} we introduce undirected and directed homology modules of directed simplicial complexes. We then use these homology modules to introduce directed and undirected persistent homology in Section \ref{section:directedPersistentHomology}, where we also show our stability results with respect to dissimilarity functions.
In Section \ref{section:algorithms}, we discuss the computational implementation of directed persistent homology and lay out some future research directions. Finally, Appendix \ref{appendix:algebraicBackground} introduces homology with semiring coefficients and shows how it relates to directed homology.

\section{Persistence modules and dissimilarity functions}\label{section:introPersistentNetworks}

Our goal is to extend persistent homology to directed simplicial complexes such as those constructed from a dissimilarity function. 
In this section, we introduce the necessary background regarding persistent homology (Section \ref{section:persistentHomologyIntro}) and dissimilarity functions (Section \ref{section:introAsymmetricNetrowks}). 

\subsection{Persistence modules, diagrams and barcodes}\label{section:persistentHomologyIntro}
In this section, we introduce persistence modules and their associated persistence diagrams and barcodes. We follow the exposition in \cite{ChoMem18b}, as it is simple and focused on persistence modules arising in the context we are interested in: that of persistent homology of finite simplicial complexes. Many of the results below hold in more generality \cite{ChaSilGliOud16} but we chose to keep the exposition simple.

Let $R$ be a ring with unity and let $T\subseteq \mathbb{R}$ be a subset.

\begin{definition}
	A \emph{persistence $R$-module} over $T$, written $\mathcal{V} = \big(\{V^\delta\},\{\nu_\delta^{\delta'}\}\big)_{\delta\le\delta'\in T}$, is a family of $R$-modules $\{V^\delta\}_{\delta\in T}$ and homomorphisms $\nu_\delta^{\delta'}\colon V^\delta\to V^{\delta'}$, whenever $\delta\le\delta'\in T$, such that
	\begin{enumerate}
	    \item for every $\delta\in T$, $\nu_\delta^\delta$ is the identity map, and
	    \item for every $\delta\le\delta'\le\delta''\in T$, $\nu_{\delta'}^{\delta''}\circ\nu_\delta^{\delta'} = \nu_\delta^{\delta''}$.
	\end{enumerate}

	Let  $\mathcal{V} = \big(\{V^\delta\},\{\nu_\delta^{\delta'}\}\big)_{\delta\le\delta'\in T}$ and $\mathcal{W} = \big(\{W^\delta\},\{\mu_\delta^{\delta'}\}\big)_{\delta\le\delta'\in T}$ be two persistence $R$-modules over $T$. A \emph{morphism of persistence $R$-modules} $f\colon\mathcal{V}\to\mathcal{W}$ is a family of morphisms of $R$-modules $\{f^\delta\colon V^\delta\to W^\delta\}_{\delta\in T}$ such that for every $\delta\le\delta'\in T$ we have a commutative diagram
	\begin{center}
		\begin{tikzpicture}
			\tikzset{node distance = 2cm, auto}
				\node (1) {$V^\delta$};
				\node[right of = 1] (2) {$V^{\delta'}$};
				\node[below of = 1,yshift=0.3cm] (3) {$W^{\delta}$};
				\node[right of = 3] (4) {$W^{\delta'}$.};
				\draw[->] (1) to node {$\nu_\delta^{\delta'}$} (2);
				\draw[->] (3) to node {$\mu_\delta^{\delta'}$} (4);
				\draw[->] (1) to node {$f^\delta$} (3);
				\draw[->] (2) to node {$f^{\delta'}$} (4);
		\end{tikzpicture}
	\end{center}
\end{definition}

Let us assume that $R$ is a field, thus $\mathcal{V} = \big(\{V^\delta\},\{\nu_\delta^{\delta'}\}\big)_{\delta\le\delta'\in T}$ is a persistence vector space, and that $V^\delta$ is finite-dimensional, for every $\delta\in \mathbb{R}$. Furthermore, we suppose that there exists a finite subset $\{\delta_0,\delta_1,\dots,\delta_n\}\subseteq T$ for which
\begin{enumerate}
    \item if $\delta\in T$, $\delta\le\delta_0$, then $V^\delta = 0$,
    \item if $\delta\in T\cap [\delta_{i-1},\delta_i)$ for some $1\le i \le n$, the map $\nu_{\delta_{i-1}}^\delta$ is an isomorphism, while $\nu_\delta^{\delta_i}$ is not, and
    \item if $\delta,\delta'\in T$, $\delta_n \le \delta < \delta'$, then $v_\delta^{\delta'}\colon V^{\delta}\to V^{\delta'}$ is an isomorphism.
\end{enumerate}
\begin{remark}
When $R$ is a field, either of these restrictions alone (namely, each $V^\delta$ being finite-dimensional or the existence of a finite subset $\{\delta_0,\delta_1,\dots,\delta_n\}$ as above) is enough to associate a \emph{persistence barcode} and a \emph{persistence diagram} to $\mathcal{V}$ \cite[Theorem 2.8]{ChaSilGliOud16}. We nevertheless assume both restrictions since they hold for persistence diagrams associated to finite simplicial complexes, which are precisely the sort of objects that arise from data, and such assumption makes the exposition simpler.
\end{remark}
Let us now introduce persistence barcodes and diagrams. First, for simplicity, we consider $\mathcal{V}$ indexed by the natural numbers: 
\[\mathcal{V} = \big(\{V^{\delta_i}\},\{\nu_{\delta_i}^{\delta_{i+1}}\}\big)_{i\in\mathbb{N}},\]
where $V^{\delta_k} = V^{\delta_n}$ for all $k \ge n$ and $\nu_{\delta_k}^{\delta_l}$ is the identity whenever $k, l \ge n$. (This clearly contains all of the information in $\mathcal{V}$.) 

By \cite[Basis Lemma]{EdeJabMro15}, we can find bases $B_i$ of the vector spaces $V^{\delta_i}$, $i \in \mathbb{N}$, such that
\begin{enumerate}
	\item $\nu_{\delta_i}^{\delta_{i+1}}(B_i)\subseteq B_{i+1}\cup\{0\}$,
	\item $\rank(\nu_{\delta_i}^{\delta_{i+1}}) = |v_{\delta_i}^{\delta_{i+1}}(B_i)\cap B_{i+1}|$, and
	\item each $w\in \Im\big(v_{\delta_i}^{\delta_{i+1}}\big)\cap B_{i+1}$ is the image of exactly one element $v\in B_i$.
\end{enumerate} 
Such bases are called \emph{compatible bases}. Elements in $B_i$ that are mapped to an element in $B_{i+1}$ correspond to linearly independent elements of $V^{\delta_i}$ that `survive' until the next step in the persistence vector space. Similarly, elements in a basis $B_i$ which are not in $B_{i-1}$ are considered to be `born' at index $i$. Formally, we define
\[L:=\{(b,i)\mid b\in B_i, b\not\in\Im(\nu_{\delta_{i-1}}^{\delta_i}),i > 0\}\cup\{(b,0)\mid b\in B_0\}.\]
Given $(b,i) \in L$, we call $i$ the \emph{birth index} of the basis element $b$. This element `survives' on subsequent bases until it is eventually mapped to zero. When this happens, the number of steps taken until the class dies is its \emph{death index} of $b$. Formally, the death index of $(b,i)$ is 
\[\ell(b,i) :=\max\{j\in\mathbb{N}\mid (\nu_{\delta_{j-1}}^{\delta_j}\circ\dots\circ\nu_{\delta_{i+1}}^{\delta_{i+2}}\circ\nu_{\delta_i}^{\delta_{i+1}})(b)\in B_j\}.\]
We allow $\ell(b,i) = \infty$, if $b$ is in every basis $B_j$ for all $j\ge i$, so that the death index takes values in $\overline{\mathbb{N}}=\mathbb{N} \cup \{+\infty\}$. Using this information, we can introduce the \emph{persistence barcode} of $\mathcal{V}$. We call a pair $(X,m)$ a \emph{multiset} if $X$ is a set and $m \colon X \to \overline{\mathbb{N}}=\mathbb{N}\cup \{+\infty\}$ is a function. We call $m(x)$ the \emph{multiplicity} of $x \in X$.
 
\begin{definition}\label{definition:persistence barcode}
	Consider a persistence vector space $\mathcal{V} = \big(\{V^{\delta_i}\},\{\nu_{\delta_i}^{\delta_{i+1}}\}\big)_{i\in\mathbb{N}}$ 
	as above. The \emph{persistence barcode of $\mathcal{V}$} is defined as the multiset of intervals
	\[\Pers(\mathcal{V}) := \big\{[\delta_i,\delta_{j+1})\mid\text{$\exists(b,i)\in L, j\in\mathbb{N}$ s.t.
	$\ell(b,i)=j$}\big\}\cup\big\{[\delta_i,+\infty)\mid\text{$\exists(b,i)\in L$ s.t. $\ell(b,i)=+\infty$}\big\},\]
	with the multiplicity of $[\delta_i,\delta_{j+1})$ (respectively $[\delta_i,+\infty)$) being the number of elements $(b,i)\in L$ such that $\ell(b,i) = j$ (respectively $\ell(b,i)=+\infty$). 
\end{definition}

Thus, persistence barcodes encode the birth and death of elements in a family of compatible bases. Crucially, and even though compatible bases are not unique, the number of birth and death events at each step is determined by the rank of the linear maps $\nu_{\delta_i}^{\delta_{i+1}}$ and their compositions, and it is thus independent of the choice of compatible bases. Furthermore, the persistence barcode of a persistence vector space completely determines both the dimension of the vector spaces $V^{\delta_i}$ and the ranks of the linear maps between them, hence it determines the persistence vector space up to isomorphism \cite[Theorem 2.8]{ChaSilGliOud16}.

Each interval in $\Pers(\mathcal{V})$ is called a \emph{persistence interval}. Persistence barcodes can be represented by stacking horizontal lines, each of which represents a persistence interval or bar. The endpoints of the line in the horizontal axis correspond to the endpoints of the interval it represents, whereas the vertical axis has no significance other than being able to represent every persistence interval at once. Persistence bars can be stacked in any order, although they are usually ordered by their birth. This representation is what gives persistence barcodes their name.

An alternative characterisation of a persistence vector space is its \emph{persistence diagram}. Let us write $\overline{\mathbb{R}}=\mathbb{R}\cup\{-\infty,+\infty\}$ for the extended real line.

\begin{definition}\label{definition:persistenceDiagram}
    The \emph{persistence diagram} of the persistence vector space $\mathcal{V}$ is the multiset
	\[\Dgm(\mathcal{V}) := \big\{(\delta_i, \delta_{j+1})\in\overline{\mathbb{R}}^2\mid[\delta_i,\delta_{j+1})\in\Pers(\mathcal{V})\big\}\cup\big\{(\delta_i,+\infty)\in\overline{\mathbb{R}}^2\mid [\delta_i,+\infty)\in\Pers(\mathcal{V})\big\}.\]
	The multiplicity of a point in $\Dgm(\mathcal{V})$ is the multiplicity of the corresponding interval in $\Pers(\mathcal{V})$.
\end{definition} 


A crucial property for applications of persistent homology to real data is that a small perturbation of the input (a data set, encoded as filtration of simplicial complexes) results in a small perturbation of the output (its persistence module). Algebraically, this amounts to proving that a small perturbation of the persistence diagram results in a small perturbation of the associated persistence module. In order to state this stability result, we therefore need to introduce distances between persistence diagrams, and persistence modules, respectively. 

To measure how far apart two persistence diagrams are, we can use the bottleneck distance between multisets of $\overline{\mathbb{R}}^2$. Let $\Delta^\infty$ denote the multiset of $\overline{\mathbb{R}}^2$ consisting of every point in the diagonal counted with infinite multiplicity. A bijection of multisets $\varphi\colon (X,m_X)\to (Y,m_Y)$ is a bijection of sets $\varphi\colon \cup_{x\in X} \sqcup_{i=1}^{m_X(x)} x\to \cup_{y\in Y} \sqcup_{i=1}^{m_Y(y)} y$, that is, a bijection between the sets obtained when counting each element in both $X$ and $Y$ with its multiplicity.

\begin{definition}\label{definition:bottleneckDistance}
	Let $A$ and $B$ be two multisets in $\overline{\mathbb{R}}^2$. The \emph{bottleneck distance} between $A$ and $B$ is defined as
	\[d_B(A,B):=\inf_\varphi\left\{\sup_{a\in A}\|a-\varphi(a)\|_\infty \right\},\]
where the infimum is taken over all bijections of multisets $\varphi\colon A\cup\Delta^\infty\to B\cup\Delta^\infty$ and where $\|\cdot\|_\infty$ denotes the $\ell^\infty$-norm in $\overline{\mathbb{R}}^2$. 

Note that the bottleneck distance between two persistence diagrams defined from persistence barcodes can only be finite if the diagrams have the same amount of bars of form $[a,\infty)$ for $a\in\mathbb{R}$.
\end{definition}

We also need a distance between persistence vector spaces. To that purpose, we use the \emph{interleaving distance}, introduced in \cite{ChaChoGli+09}.  

\begin{definition}\label{definition:interleaving}
	Let $\mathcal{V} = \big(\{V^\delta\},\{\nu_\delta^{\delta'}\}\big)_{\delta\le\delta'\in \mathbb{R}}$ and $\mathcal{W} = \big(\{W^\delta\},\{\mu_\delta^{\delta'}\}\big)_{\delta\le\delta'\in \mathbb{R}}$ be two persistence vector spaces, and $\varepsilon \ge 0$. We say that $\mathcal{V}$ and $\mathcal{W}$ are \emph{$\varepsilon$-interleaved} if there exist two families of linear maps
	\[\{\varphi_\delta\colon V^\delta\to W^{\delta+\varepsilon}\}_{\delta\in\mathbb{R}},\quad \{\psi_\delta\colon W^\delta\to V^{\delta+\varepsilon}\}_{\delta\in\mathbb{R}}\]
	such that the following diagrams are commutative for all $\delta'\ge\delta\in\mathbb{R}$:
	\begin{center}\begin{tikzpicture}
		\tikzset{node distance = 2.5cm, auto}
		\node(1) {$V^\delta$};
		\node[right of = 1,xshift=0.3cm] (2) {$V^{\delta'}$};
		\node[below of = 1] (3) {$W^{\delta + \varepsilon}$};
		\node[below of = 2] (4) {$W^{\delta'+\varepsilon}$,};
		\node[right of = 2, xshift = 1cm] (5) {$W^{\delta}$};
		\node[right of = 5,xshift = 0.3cm] (6) {$W^{\delta'}$};
		\node[below of = 5] (7) {$V^{\delta+\varepsilon}$};
		\node[below of = 6] (8) {$V^{\delta'+\varepsilon}$,};
		\draw[->] (1) to node{$\nu_\delta^{\delta'}$} (2);
		\draw[->] (3) to node{$\mu_{\delta+\varepsilon}^{\delta'+\varepsilon}$} (4);
		\draw[->] (1) to node {$\varphi_\delta$} (3);
		\draw[->] (2) to node {$\varphi_{\delta'}$} (4);
		\draw[->] (5) to node{$\mu_\delta^{\delta'}$} (6);
		\draw[->] (7) to node{$\nu_{\delta+\varepsilon}^{\delta'+\varepsilon}$} (8);
		\draw[->] (5) to node {$\psi_\delta$} (7);
		\draw[->] (6) to node {$\psi_{\delta'}$} (8);
	\end{tikzpicture}\end{center}
	\begin{center}\begin{tikzpicture}
		\tikzset{node distance = 2.5cm, auto}
		\node(1) {$V^\delta$};
		\node[right of = 1, xshift=2cm] (3) {$V^{\delta+2\varepsilon}$};
		\node[below of = 1,xshift=2.25cm] (4) {$W^{\delta + \varepsilon}$,};
		\node[right of = 3, xshift = 1cm](5) {$W^\delta$};
		\node[right of = 5,xshift=2cm] (7) {$V^{\delta+2\varepsilon}$};
		\node[below of = 5, xshift = 2.25cm] (8) {$W^{\delta + \varepsilon}$.};
		\draw[->] (1) to node{$\nu_\delta^{\delta+2\varepsilon}$} (3);
		\draw[->] (1) to node{$\varphi_\delta$} (4);
		\draw[->] (4) to node {$\psi_{\delta+\varepsilon}$} (3);
		\draw[->] (5) to node{$\mu_\delta^{\delta+2\varepsilon}$} (7);
		\draw[->] (5) to node{$\psi_\delta$} (8);
		\draw[->] (8) to node {$\varphi_{\delta+\varepsilon}$} (7);
	\end{tikzpicture}\end{center}

	The \emph{interleaving distance} between $\mathcal{V}$ and $\mathcal{W}$ is then defined as
	\[d_I(\mathcal{V},\mathcal{W}) = \inf\{\varepsilon\ge 0 \mid\text{$\mathcal{V}$ and $\mathcal{W}$ are $\varepsilon$-interleaved}\}.\]
\end{definition}


The authors in \cite{ChaChoGli+09} show that the interleaving distance is a pseudometric (a zero distance between distinct points may occur) in the class of persistence vector spaces. Moreover, they show the following Algebraic Stability Theorem. 

\begin{theorem}[{\cite{ChaChoGli+09}}] \label{theorem:interlievedImpliesStability}
	Let $\mathcal{V} = \big(\{V^\delta\},\{\nu_\delta^{\delta'}\}\big)_{\delta\le\delta'\in \mathbb{R}}$ and $\mathcal{W} = \big(\{W^\delta\},\{\mu_\delta^{\delta'}\}\big)_{\delta\le\delta'\in \mathbb{R}}$ be two persistence vector spaces. Then,
	\[d_B\big(\Dgm(\mathcal{V}),\Dgm(\mathcal{W})\big) \le d_I(\mathcal{V},\mathcal{W}).\]
\end{theorem}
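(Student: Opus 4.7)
The plan is to show $d_B(\Dgm(\mathcal{V}), \Dgm(\mathcal{W})) \le \varepsilon$ for every $\varepsilon > d_I(\mathcal{V}, \mathcal{W})$ and then take the infimum. Fix such an $\varepsilon$ together with interleaving families $(\varphi_\delta)$, $(\psi_\delta)$. Under the standing hypotheses on $\mathcal{V}$ and $\mathcal{W}$ (pointwise finite-dimensional with finitely many critical values), each module decomposes as a finite direct sum of interval modules and each persistence diagram is a finite multiset above the diagonal, reducing the problem to a combinatorial matching.

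The algebraic cornerstone is a Box Lemma obtained by chasing the four interleaving squares. For $s + 2\varepsilon \le t$ one checks that
\[\psi_{t} \circ \mu_{s}^{t} \circ \varphi_{s-\varepsilon} = \nu_{s-\varepsilon}^{t+\varepsilon},\]
together with its symmetric counterpart swapping $\mathcal{V}$ and $\mathcal{W}$. Writing $r_\mathcal{V}(a,b) = \rank \nu_a^b$ and $r_\mathcal{W}(a,b) = \rank \mu_a^b$, this yields the rank inequalities $r_\mathcal{V}(s-\varepsilon, t+\varepsilon) \le r_\mathcal{W}(s,t)$ and $r_\mathcal{W}(s-\varepsilon, t+\varepsilon) \le r_\mathcal{V}(s,t)$. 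By the interval decomposition, $r_\mathcal{V}(a,b)$ counts the bars of $\mathcal{V}$ whose left endpoint is at most $a$ and right endpoint strictly exceeds $b$, so the multiplicity of any rectangle in $\Dgm(\mathcal{V})$ is expressible by inclusion-exclusion as an alternating sum of four values of $r_\mathcal{V}$, and likewise for $\mathcal{W}$.

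With these rank comparisons in hand, I would extract a bijection of multisets $\gamma: \Dgm(\mathcal{V}) \cup \Delta^\infty \to \Dgm(\mathcal{W}) \cup \Delta^\infty$ with $\|a - \gamma(a)\|_\infty \le \varepsilon$ by applying Hall's marriage theorem to the bipartite graph whose edges join points (including diagonal copies) at $\ell^\infty$-distance at most $\varepsilon$. Once $\gamma$ is constructed, $d_B \le \varepsilon$ follows immediately, and letting $\varepsilon \searrow d_I(\mathcal{V},\mathcal{W})$ closes the argument. The hard part is verifying Hall's condition: the Box Lemma shifts each corner of a rectangle asymmetrically, so it does not give $\mu_\mathcal{V}(R) \le \mu_\mathcal{W}(R^\varepsilon)$ in a single stroke. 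It does, however, give clean comparisons on upper-sets of the diagram (points $(b,d)$ with $b \le a$, $d > c$), since these are measured directly by the rank function. A careful induction on the finite list of critical values of $\mathcal{V}$ and $\mathcal{W}$, exploiting the strict slackness provided by $\varepsilon > d_I$, then upgrades these upper-set bounds into the required global Hall condition and completes the proof.
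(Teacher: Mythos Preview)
The paper does not prove this theorem: it is quoted from \cite{ChaChoGli+09} as a background result and invoked later (in the proof of Theorem~\ref{theorem:stability}) as a black box. There is therefore no proof in the paper to compare your attempt against.

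On its own merits, your outline follows the standard strategy for algebraic stability (rank inequalities from the interleaving, then a Hall-type matching), and the ingredients you name are the correct ones. The decisive step, however, is exactly the one you flag as ``the hard part'' and then leave as a promissory note: passing from the upper-set rank inequalities to the Hall condition, or equivalently to a genuine Box Lemma bounding $\mu_{\mathcal{V}}(R)$ by $\mu_{\mathcal{W}}(R^\varepsilon)$ for every rectangle $R$ above the diagonal. As you yourself observe, the four-term inclusion--exclusion for $\mu(R)$ mixes two rank comparisons that go in the helpful direction with two that go the wrong way, so naive substitution of the rank inequality at each corner fails. The published proofs close this gap either via an interpolation argument (building a one-parameter family of persistence modules joining $\mathcal{V}$ to $\mathcal{W}$, as in \cite{ChaChoGli+09}) or via the induced-matching machinery of \cite{BauLes14}; a bare ``careful induction on the finite list of critical values'' does not supply the missing combinatorics, and no such induction is known to work directly. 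As written, your proposal is a correct plan with the central technical lemma still owed.
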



\subsection{Dissimilarity functions and the correspondence distortion distance}\label{section:introAsymmetricNetrowks}

Our objective is to define a theory of persistent homology able to detect directed cycles modulo boundaries. Therefore, instead of building filtrations (of simplicial complexes) from finite metric spaces, we are interested in filtrations (of directed simplicial complexes) built from arbitrary dissimilarity functions. We will follow \cite{Tur19}, where they receive the name of set-function pairs, and \cite{CarMemRibSeg14,ChoMem18,ChoMem18b}, where they are referred to as dissimilarity networks or asymmetric networks. 

In this section, we introduce the necessary background on dissimilarity functions, and the correspondence distortion distance between them, which is a generalisation of the Gromov-Hausdorff distance to the asymmetric setting. We finish with a reformulation of this distance, established in \cite{CarMemRibSeg14}, which we will need to prove our persistent homology stability result.

\begin{definition}\label{definition:network}
	Let $V$ be a finite set. A \emph{dissimilarity function} $(V,d_V)$ on $V$ is a function $d_V\colon V\times V\to\mathbb{R}$. The value of $d_V$ on a pair $(v_1,v_2)$ is called the \emph{distance}, or \emph{dissimilarity}, \emph{from} $v_1$ \emph{to} $v_2$. 
\end{definition}

Note that no restrictions are imposed on $d_V$, thus it may not be symmetric, the triangle inequality may not hold, and the distance from a point to itself may not be zero.  These functions are also referred to as \emph{asymmetric networks} \cite{CarMemRibSeg14,ChoMem18,ChoMem18b} since they may be represented as a network with vertex set $V$ and an edge from $v_1$ to $v_2$ with weight $d_V(v_1,v_2)$ for every $(v_1,v_2)\in V\times V$. Thus, dissimilarity functions are very flexible and allow for the modelling of widely different problems, including any problem that can be modelled using a network. 

We would like to build (directed) simplicial complexes and, ultimately, persistence diagrams from dissimilarity functions. In order to check the stability of our constructions, we need a way to measure how close two such objects are. When comparing networks with the same vertex sets, a natural choice is the $\ell^\infty$ norm. However, we are interested in comparing dissimilarity functions on different vertex sets. To that end, we consider the $\ell^\infty$ norm over all possible pairings (quantified by a binary relation) between vertex sets, following ideas similar to those behind the definition of the Gromov-Hausdorff distance. 

\begin{definition}\label{definition:networkDistance}
	Let $(V,d_V)$ and $(W,d_W)$ be two dissimilarity functions and let $\theta$ be a non-empty binary relation between $V$ and $W$, that is, an arbitrary subset $\theta \subseteq V \times W$. The \emph{distortion} of the relation $\theta$ is defined as
	\[\dis(\theta) := \max_{(v_1,w_1), (v_2,w_2)\in \theta} |d_V(v_1,v_2) - d_W(w_1,w_2)|.\]
	A \emph{correspondence} between $V$ and $W$ is a relation $\theta$ between these sets such that $\pi_V(\theta)=V$ and $\pi_W(\theta) = W$, where $\pi_V \colon V \times W \to V$ is the projection onto $V$, and similarly for $\pi_W$. That is, $\theta$ is a correspondence if every element of $V$ is related to at least an element of $W$, and vice-versa. The set of all correspondences between $V$ and $W$ is denoted $\mathcal{R}(V,W)$.

	The \emph{correspondence distortion distance} \cite{Tur19} between $(V,d_V)$ and $(W,d_W)$ is defined as
	\[d_{\CD}\big((V,d_V), (W,d_W)\big) = \frac{1}{2}\min_{\theta\in\mathcal{R}(V,W)}\dis(\theta).\]
	This notion agrees with that of the Gromov-Hausdorff distance when $(V,d_V)$ and $(W,d_W)$ are metric spaces, \cite[Section 7.3.3]{BurBurIva01}.
\end{definition}

We will use a reformulation of the correspondence distortion that can be found in \cite{ChoMem18}. In order to introduce it, we need to define the distortion and co-distortion of maps between sets endowed with dissimilarity functions.

\begin{definition}
	Let $(V,d_V)$ and $(W,d_W)$ be any two dissimilarity functions and let $\varphi\colon V\to W$ and $\psi\colon W\to V$ be maps of sets. The \emph{distortion} of $\varphi$ (with respect to $d_V$ and $d_W$) is defined as
	\[\dis(\varphi) := \max_{v_1,v_2\in V} \big|d_V(v_1,v_2) - d_W\big(\varphi(v_1),\varphi(v_2)\big)\big|.\]
	The \emph{co-distortion} of $\varphi$ and $\psi$ (with respect to $d_V$ and $d_W$) is defined as
	\[\codis(\varphi,\psi) := \max_{(v,w)\in V\times W} \big|d_V\big(v,\psi(w)\big) - d_W\big(\varphi(v),w\big)\big|.\]
	Note that codistortion is not necessarily symmetrical, namely, $\codis(\varphi,\psi)$ and $\codis(\psi,\varphi)$ may be different if either of the dissimilarity functions are asymmetric.
\end{definition}

Finally, we have the following reformulation of the correspondence distortion distance.

\begin{proposition}[{\cite[Proposition 9]{ChoMem18}}]\label{proposition:networkDistance}
	Let $(V, d_V)$ and $(W, d_W)$ be any two dissimilarity functions. Then,
	\[d_{\CD}\big((V,d_V),(W,d_W)\big) = \frac{1}{2}\min_{\substack{\varphi\colon V\to W,\\ \psi\colon W\to V}}\big\{\max\left\{\dis(\varphi),\dis(\psi), \codis(\varphi,\psi), \codis(\psi,\varphi)\right\}\big\}.\]
\end{proposition}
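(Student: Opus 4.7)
The plan is to prove the equality by showing both inequalities, translating between correspondences $R \subseteq V \times W$ and pairs of maps $(\varphi, \psi)$ in a natural way, and checking that the four quantities $\dis(\varphi), \dis(\psi), \codis(\varphi,\psi), \codis(\psi,\varphi)$ precisely recover the different cases that arise when computing $\dis(R)$.

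For the inequality $d_{\CD} \le \tfrac{1}{2}\min\max$, I would start from an arbitrary pair of maps $\varphi \colon V\to W$ and $\psi \colon W\to V$ and construct the associated correspondence
\[
R_{\varphi,\psi} := \{(v,\varphi(v)) : v\in V\} \cup \{(\psi(w),w) : w\in W\} \;\subseteq\; V\times W,
\]
which is indeed a correspondence since $\pi_V(R_{\varphi,\psi}) = V$ and $\pi_W(R_{\varphi,\psi}) = W$. To bound $\dis(R_{\varphi,\psi})$, I would split into four cases depending on which of the two subsets each of the two pairs $(v_1,w_1), (v_2,w_2)\in R_{\varphi,\psi}$ comes from: two pairs of the form $(v,\varphi(v))$ give an expression bounded by $\dis(\varphi)$; two of the form $(\psi(w),w)$ by $\dis(\psi)$; and the mixed cases by $\codis(\varphi,\psi)$ and $\codis(\psi,\varphi)$ respectively (this is essentially what the definition of codistortion was designed to control). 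Taking the maximum gives $\dis(R_{\varphi,\psi}) \le \max\{\dis(\varphi),\dis(\psi),\codis(\varphi,\psi),\codis(\psi,\varphi)\}$, and minimising over $(\varphi,\psi)$ yields the desired inequality after dividing by $2$.

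For the reverse inequality, I would start from an arbitrary correspondence $R \in \mathcal{R}(V,W)$ and use the defining surjectivity of the projections to choose (using the axiom of choice on the finite sets, which is trivial) maps $\varphi \colon V\to W$ and $\psi\colon W\to V$ such that $(v,\varphi(v))\in R$ for all $v\in V$ and $(\psi(w),w)\in R$ for all $w\in W$. Then for any $v_1,v_2\in V$ the pairs $(v_1,\varphi(v_1)), (v_2,\varphi(v_2))$ lie in $R$, so $|d_V(v_1,v_2) - d_W(\varphi(v_1),\varphi(v_2))| \le \dis(R)$, giving $\dis(\varphi)\le\dis(R)$; similarly $\dis(\psi)\le\dis(R)$. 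For the codistortion, given $(v,w)\in V\times W$, applying the defining inequality of $\dis(R)$ to the two pairs $(v,\varphi(v))$ and $(\psi(w),w)$ of $R$ gives $|d_V(v,\psi(w)) - d_W(\varphi(v),w)| \le \dis(R)$, so $\codis(\varphi,\psi)\le\dis(R)$, and symmetrically $\codis(\psi,\varphi)\le\dis(R)$. Hence $\max\{\dis(\varphi),\dis(\psi),\codis(\varphi,\psi),\codis(\psi,\varphi)\} \le \dis(R)$, and minimising over $R$ yields $\tfrac{1}{2}\min\max \le d_{\CD}$.

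There is no genuine obstacle here: the proof is essentially a careful bookkeeping that matches the two terms $\dis(\varphi),\dis(\psi)$ with the ``same-side'' cases of pairs in $R$ and the two codistortions with the ``cross'' cases. The only subtle point worth highlighting is that the codistortion is defined so as to capture exactly the asymmetric mixed case, which is why both $\codis(\varphi,\psi)$ and $\codis(\psi,\varphi)$ are needed and cannot in general be collapsed into a single quantity. Since $V$ and $W$ are finite, all maxima and minima are attained and no measurability or selection issues arise.
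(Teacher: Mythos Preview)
The paper does not give its own proof of this proposition; it is stated as a citation of \cite[Proposition 9]{ChoMem18} and used as a black box. Your proposal is a correct and complete proof, following the standard argument one expects for this kind of reformulation: build a correspondence from a pair of maps via their graphs, and conversely select maps from a correspondence using surjectivity of the projections, then match the four quantities $\dis(\varphi)$, $\dis(\psi)$, $\codis(\varphi,\psi)$, $\codis(\psi,\varphi)$ with the four cases arising in $\dis(R)$. There is nothing to compare against in the present paper, but your argument is exactly the natural one and would serve as a self-contained proof here.
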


\section{Directed homology of directed simplicial complexes} \label{section:homologySimplicialComplexes}

In this section, we introduce and study a family of subrings of the homology rings of simplicial complexes which encode directionality information and in particular detect directed cycles (Fig.~\ref{figure:directedCycles}). We cannot introduce such a family using (undirected) simplicial complexes, as they cannot encode directionality information of the simplices. One approach is to use the so-called \emph{ordered-set complexes}, where simplices are sets with a total order on the vertices. They generalise simplicial complexes, which can be encoded as fully symmetric ordered-set complexes, that is, ordered-set complexes where if a set of vertices forms a simplex, it must do so with every possible order. However, persistent homology of ordered-set complexes is not stable (Remark \ref{remark:stabilityAndRepetitions}). 

To achieve stability, we use one further generalisation, called \emph{ordered tuple complexes} or \emph{OT-complexes} in \cite{Tur19}, and called \emph{directed simplicial complexes} in this article (Definition \ref{definition:directedSimplicialComplex}). The only difference is that arbitrary repetitions of vertices are allowed in the ordered tuples representing simplices. Clearly, any ordered-set complex is a directed simplicial complex. Furthermore, a (undirected) simplicial complex can be encoded as a fully symmetric (as above) directed simplicial complex where every possible repetition of vertices is also included. When doing so, the (undirected) simplicial complex and its associated directed simplicial complex have isomorphic homologies over rings (Remark \ref{remark:orderedTupleAssociated}), and morphisms of simplicial complexes can be lifted to morphisms between their associated directed simplicial complexes (Remark \ref{remark:definitionMorphism}). Finally, and crucially for us, we will be able to use the above mentioned subrings encoding information on directed cycles to introduce a theory of persistent homology for dissimilarity functions which is stable with respect to the correspondence-distortion distance (Section \ref{section:directedPersistentHomology}). Note that, as ordered-set complexes are directed simplicial complexes, the results stated here apply to homology computations on ordered-set complexes as well.

\subsection{Chain complexes of directed simplicial complexes}

In this section, we introduce directed simplicial complexes and their associated chain complexes and homology modules, including modules of directed homology.  

\begin{definition}\label{definition:directedSimplicialComplex}
	A \emph{directed simplicial complex}, or \emph{ordered tuple complex}, is a pair $(V,X)$ where $V$ is a finite set of \emph{vertices}, and $X$ is a family of tuples $(x_0,x_1,\dots,x_n)$ of elements of $V$ such that if $(x_0,x_1,\dots,x_n)\in X$, then $(x_0,x_1,\dots,\widehat{x_i},\dots,x_n)\in X$ for every $i=0,1,\dots,n$. (Here and throughout the rest of the paper, $\widehat{x_i}$ indicates that $x_i$ is omitted from the list.) 
	
	We will denote the directed simplicial complex $(V,X)$ just by $X$, and assume that every vertex belongs to at least one directed simplex (so that $V$ is uniquely determined from $X$). Elements of $X$ of length $n+1$ are called \emph{$n$-simplices}, and the subset of the $n$-simplices of $X$ is denoted by $X_n$. Note that arbitrary repetitions of vertices in a tuple are allowed.
\end{definition}

Note that directed simplicial complexes are more general objects than simplicial complexes, in a way that allows directed simplicial complexes to encode asymmetric information. Note further that it is possible to define a geometric realisation for these objects in which simplices with vertex repetitions are collapsed to a simplex with no repetitions, thus simplices with repeated vertices play a role similar to that of the degenerate simplices in simplicial sets. 

We make use of the following terminology when dealing with directed simplicial complexes.  Elements of $X$ of length $n+1$ are called \emph{$n$-simplices}, and the subset of the $n$-simplices of $X$ is denoted by $X_n$. An $n$-simplex obtained by removing vertices from an $m$-simplex, $n \le m$, is said to be a \emph{face} of the $m$-simplex. Such face is called \emph{proper} if $n<m$, or equivalently, if at least vertex of the $m$-simplex is removed. A directed simplicial complex is said to be $n$-dimensional, denoted $\dim(X)=n$, if $X_{n+1}$ is trivial (empty) but $X_n$ is not. A collection $Y$ of simplices of $X$ that is itself a directed simplicial complex is said to be a \emph{directed simplicial subcomplex} (or, simply, \emph{subcomplex}) of $X$, denoted $Y\subseteq X$. Note that the vertex set of $Y$ may be strictly smaller than that of $X$.

We now introduce chain complexes associated to a directed simplicial complex. These definitions are straight generalisations of those for (undirected) simplicial complexes.

\begin{definition}
	Let $R$ be a commutative ring. The \emph{$n$-dimensional chains} of $X$ are defined as the elements of the free $R$-module generated by the $n$-simplices,
	\[C_n(X,R) = R\big(\left\{[x_0,x_1,\dots,x_n]\mid
	(x_0,x_1,\dots,x_n)\in X\right\}\big).\]
	We call the elements $[x_0,x_1,\dots,x_n]\in C_n(X,R)$, $(x_0,x_1,\dots,x_n)\in X$, \emph{elementary $n$-chains}.
\end{definition}

\begin{definition}\label{definition:chainComplexDirectedComplex}
	Let $X$ be a directed simplicial complex. For each $n>0$, we define a morphism of $R$-modules $\partial_n\colon C_n(X,R)\to C_{n-1}(X,R)$ by
	\[\partial_n([x_0,x_1,\dots,x_n]) = \sum_{i=0}^{n}(-1)^i [x_0,x_1,\dots,\widehat{x_{i}},\dots,x_n]\]
    For $n=0$, let $\partial_0\colon C_0(X,R)\to\{0\}$ be the trivial map.
\end{definition}

A straightforward computation analogous to the one for chain complexes in simplicial homology shows that $\{C_n(X,R),\partial_n\}$ is a chain complex of $R$-modules. In particular, we can define homology groups.

\begin{definition}\label{definition:directedHomologyComplexes}
	Let $X$ be a directed simplicial complex, $R$ a commutative ring, and $n \ge 0$. The $n$-dimensional \emph{cycles}, \emph{boundaries}, and \emph{homology} of $X$ are $Z_n(X,R) = \{x\in C_n(X,R)\mid \partial_n(x)=0\}$, $B_n(X,R)=\Im \partial_{n+1}$ and the quotient $H_n(X,R)=Z_n(X,R)/B_n(X,R)$, respectively.
\end{definition}

\begin{remark}\label{remark:orderedTupleAssociated}
	If $X$ is an (undirected) simplicial complex, we can define an ordered-set simplicial complex $X^{\text{OT}}$ where $(x_0,x_1,\dots,x_n)\in X^{\text{OT}}$ whenever $\{x_0,x_1,\dots,x_n\}$, after removing any repetition of vertices, is a simplex in $X$. The chain complex $C_*(X^{\text{OT}},R)$ is called the \emph{ordered chain complex} of $X$ in \cite{Mun84}, and its homology is isomorphic to the singular homology of $X$ over $R$.
\end{remark}

We now discuss how to only retain the information associated to directed cycles. Note that when introducing the differential for the chain complexes of simplicial complexes, an orientation is chosen for each of the simplices. However, undirected cycles can still be produced because the chosen orientation for a simplex can be reverted by changing the sign of the corresponding elementary chain. As such, homology of (undirected) simplicial complexes is independent of these choices of orientations.

In the case of directed simplicial complexes, orientations are not chosen, and we can ensure that we are only retaining information regarding directed cycles by making sure that all of the chains have positive coefficients, when the coefficient ring is chosen appropriately. However, we also require the introduced object to be a submodule of the entire module of homology. Thus, we arrive at the following definition.

\begin{definition}\label{definition:directed-homology}
	Let $X$ be a directed simplicial complex and let $R\in\{\mathbb{Z},\mathbb{Q},\mathbb{R}\}$. The set of \emph{$n$-dimensional directed cycles} of $X$ over $R$ is
	\[Z_n^{\Dir}(X,R) = \left\{\sum_i\lambda_i c_i\in Z_n(X,R)\mid \lambda_i\ge 0, \text{$c_i$ elementary $n$-chain}\right\}.\]
	The $n$th dimensional module of directed homology, $H_n^{\Dir}(X,R)$, is the submodule of $H_n(X,R)$ generated by the directed cycles,
	\[H_n^{\Dir}(X,R) = \left\langle\left\{[x]\mid x\in Z_n^{\Dir}(X,R)\right\}\right\rangle.\]
\end{definition}

\begin{remark}
    The rings $\mathbb{Z},\mathbb{Q}$ and $\mathbb{R}$ are used to simplify the exposition: the definition of $Z_n^{\Dir}(X,R)$, and thus of $H_n^{\Dir}(X,R)$, can be made in an analogous way for any ring $R$ that is the Grothendieck completion of a cancellative, zerosumfree semiring $\Lambda$ (see Appendix \ref{appendix:algebraicBackground}).
\end{remark}

In view of Definition \ref{definition:directed-homology} it is clear that, in dimension $1$, directed circuits in the directed graph underlying a directed simplicial complex will give rise to directed cycles. In Section \ref{section:computationsAndExamples} we will show how these directed submodules of homology behave in a desirable way.

\subsection{Functoriality of directed homology}\label{section:homologyFunctoriality}

In this section, we prove that homology and directed homology are functors from the category of directed simplicial complexes to the category of graded $R$-modules. We also show that two morphisms allowing for the construction of the prism operator must induce the same map on homology, a result we will need to prove our persistent homology stability result. We begin by introducing morphisms of directed simplicial complexes.

\begin{definition}\label{definition:simplicialMorphism}
	A \emph{morphism of directed simplicial complexes} $f\colon (V,X)\to (W,Y)$ or just $f\colon X\to Y$, is a map of sets $f\colon V\to W$ such that if $(x_0,x_1,\dots,x_n)\in X$ then $\big(f(x_0), f(x_1),\dots, f(x_n)\big)\in Y$.
\end{definition}

\begin{remark}\label{remark:definitionMorphism}
    This definition is stricter than the classical notion of morphism of simplicial complexes, where morphisms are allowed to take simplices to simplices of lower dimension, as directed simplicial complexes can intrinsically account for vertex repetitions. However, note that if $X$ and $Y$ are (undirected) simplicial complexes, a map $f\colon X\to Y$ is a morphism of simplicial complexes if and only if $f\colon X^{\text{OT}}\to Y^{\text{OT}}$ (see Remark \ref{remark:orderedTupleAssociated}) is a morphism of directed simplicial complexes.
\end{remark}

\begin{definition}
	Let $f\colon X\to Y$ be a morphism of directed simplicial complexes. Then $f$ induces morphisms of $R$-modules $C(f) = \{C_n(f)\}$, defined as
	\begin{align*} C_n(f)\colon C_n(X,R)&\longrightarrow C_n(Y,R) \\
		[x_0,x_1,\dots,x_n]&\longmapsto [f(x_0), f(x_1),\dots,f(x_n)].
	\end{align*}
	We will often abbreviate $C_n(f)=f_n$.
\end{definition}

\begin{proposition}\label{proposition:degmorphism}
	If $f\colon X\to Y$ is a morphism of directed simplicial complexes, the family of maps $\{C_n(f)\}$ is a $R$-homomorphism of chain complexes. Therefore, it induces a map $H_n(f)\colon H_n(X,R)\to H_n(Y,R)$. Furthermore, $H_n(f)$ restricts to a map $H_n^{\Dir}(f)\colon H_n^{\Dir}(X,R)\to H_n^{\Dir}(Y,R)$.
\end{proposition}

\begin{proof}
    Let $[x_0,x_1,\dots,x_n]\in C_n(X,R)$ be a simplex. Simple computations show that $C_{n-1}(f)\partial_n = \partial_n C_n(f)$, thus inducing a map $H_n(f)\colon H_n(X,R)\to H_n(Y,R)$.
    
    Now let $x\in Z_n^{\Dir}(X,R)$. We can write $x = \sum_i \lambda_i x_i$, where $\lambda_i\ge 0$ and $x_i$ is an elementary $n$-chain. Then, $C_n(f)(x) = \sum_i\lambda_i C_n(f)(x_i)$. Clearly, $C_n(f)$ takes elementary $n$-chains to elementary $n$-chains, thus $C_n(f)(x)\in Z_n^{\Dir}(Y,R)$. It is then clear from the definition of directed homology that $H_n(f)$ restricts to $H_n(f)\colon H_n(X,R)\to H_n(Y,R)$.
\end{proof}

\begin{remark}\label{remark:morfismOTComplex}
	Let $X$ and $Y$ be (undirected) simplicial complexes and let $R$ be a ring. If $f\colon X\to Y$ is a morphism of (undirected) simplicial complexes, the map induced on homology by $f\colon X\to Y$ is the same as the map induced on homology by $f\colon X^{\text{OT}}\to Y^{\text{OT}}$ (see Remarks \ref{remark:orderedTupleAssociated} and \ref{remark:definitionMorphism}).
\end{remark}

\begin{corollary}
	(Directed) Homology is a functor from the category of directed simplicial complexes to the category of graded $R$-modules. In particular, isomorphic directed simplicial complexes have isomorphic (directed) homologies.	
\end{corollary}

We finish this section by showing a sufficient condition for two morphisms to induce the same map on homology. We will need this result to prove that our definition of persistent homology is stable.

\begin{lemma}\label{lemma:prismConstruction}
	Let $R$ be a commutative ring. Let $X$ and $Y$ be two directed simplicial complexes and let $f,g\colon X\to Y$ be morphisms of directed simplicial complexes such that if $(x_0,x_1,\dots,x_n)\in X$, then $\big(f(x_0),f(x_1),\dots,f(x_i),g(x_i),\dots,g(x_n)\big)\in Y$ for every $i=0,1,\dots,n$. Then, $H_n(f) = H_n(g)$ for every $n \ge 0$. Consequently, $H_n^{\Dir}(f)=H_n^{\Dir}(g)$, for every $n\ge 0$.
\end{lemma}

\begin{proof}
	For $x = [x_0,x_1,\dots,x_n]\in C_n(X,R)$ an elementary $n$-chain, define
	\[
	s_n[x_0,x_1,\dots,x_n] = (-1)^i \sum_{i=0}^n [f(x_0),\dots,f(x_{i}),g(x_{i}),\dots,g(x_n)].\]
	We show that this family of maps provides a chain homotopy between $f$ and $g$. Namely, we prove that $C_n(g) - C_n(f) = \partial_{n+1} s_n + s_{n-1}\partial_n$, for all $n$. On the one hand,
	\begin{equation}\label{eq:sdx}
	    \begin{split}
	        s_{n-1}\partial_n(x) = \sum_{i=0}^n (-1)^i \left[ \sum_{j=0}^{i-1} (-1)^j \big[f(x_0), f(x_1),\dots, f(x_j), g(x_j),\dots, \widehat{g(x_i)},\dots, g(x_n)\big] \right.\\
	        \left.+ \sum_{j=i+1}^n (-1)^{j+1} \big[f(x_0),f(x_1),\dots, \widehat{f(x_i)},\dots, f(x_j),g(x_j),\dots, g(x_n)\big]\right].
	    \end{split}
	\end{equation}
	On the other hand,
    \begin{equation*}
        \begin{split}
            \partial_{n+1} s_n(x) = \sum_{j=0}^n (-1)^j\left[ \sum_{i=0}^j (-1)^i \big[f(x_0), f(x_1),\dots, \widehat{f(x_i)},\dots, f(x_j), g(x_j),\dots,g(x_n)\big] \right.\\
            \left.+ \sum_{i=j}^n (-1)^{i+1} \big[f(x_0),f(x_1),\dots,f(x_j),g(x_j),\dots,\widehat{g(x_i)},\dots,g(x_n)\big]\right].
        \end{split}
    \end{equation*}
    By exchanging the roles of the indices in this last equation, we obtain that
	\begin{equation}\label{eq:dsx}
	    \begin{split}
	        \partial_{n+1} s_n(x) = \sum_{i=0}^n \left[\sum_{j=0}^i (-1)^{i+1} (-1)^j \big[f(x_0),f(x_1),\dots,f(x_j),g(x_j),\dots,\widehat{g(x_i)},\dots,g(x_n)\big]\right.\\
	        \left.+ \sum_{j=i}^n (-1)^i (-1)^j\big[f(x_0),f(x_1),\dots, \widehat{f(x_i)},\dots, f(x_j),g(x_j),\dots, g(x_n)\big]\right].
	    \end{split}
	\end{equation}
    Now, adding Equations \eqref{eq:sdx} and \eqref{eq:dsx},
	\begin{align*}
	    \partial_{n+1} s_n(x) + s_{n-1}\partial_n(x) = &\sum_{i=0}^n [f(x_0), f(x_1),\dots,f(x_{i-1}),g(x_i),\dots,g(x_n)] \\ 
	    - &\sum_{i=0}^n \big[f(x_0),f(x_1),\dots,f(x_i),g(x_{i+1}),\dots,g(x_n)\big]. 
	\end{align*}
	It is now straightforward to check that this sum amounts to
	\[[g(x_0), g(x_1),\dots, g(x_n)] - [f(x_0),f(x_1),\dots,f(x_n)] = C_n(g)(x) - C_n(f)(x),\]
	and we are done.
\end{proof}

\subsection{Basic computations and examples}\label{section:computationsAndExamples}

In this section, we explore some basic properties of this homology theory. We begin by studying the relation between homology and connectivity.

\begin{definition}
    Let $(V,X)$ be a directed simplicial complex and $v,w \in V$ be two vertices. A \emph{path} from $v$ to $w$ in $X$ a sequence of vertices $v=x_0,x_1,\ldots,x_n=w$ such that either  $(x_i,x_{i+1})$ or $(x_{i+1},x_{i})$ is a simplex, for all $1\le i \le n-1$. The \emph{(weakly) connected components} of $X$ are the equivalence classes of the equivalence relation $v \sim w$ if there is a path in $X$ from $v$ to $w$. We call $X$ \emph{(weakly) connected} if it has only one connected component, that is, every pair of vertices can be connected by a path. 
\end{definition}

Note that this notion of connectedness ignores the direction of the edges (1-simplices). The next result shows that we can compute the homology of each connected component independently. The proof follows by observing that the chain complex $C_n(X,R)$ is clearly the direct sum of the chain complexes associated to each of the (weakly) connected components of $X$. In particular, it is immediate that any directed cycle can be decomposed as a sum of directed cycles in each of the directed components.

\begin{proposition}\label{proposition:connectedComponents}
    The (directed) homology of a directed simplicial complex $X$ is isomorphic to the direct sum of the (directed) homology modules of its (weakly) connected components.
\end{proposition}

We next show that the $0$th (directed) homology counts the number of (weakly) connected components of a directed simplicial complex $X$. We start with a lemma. 

\begin{lemma}\label{lemma:verticesneverboundaries}
	Let $R$ be a commutative ring and $X$ be a directed simplicial complex. For any vertex $v$ and any $\lambda\in R$, $\lambda[v]$ is homologous to the trivial class if and only if $\lambda = 0$.
\end{lemma}

\begin{proof}
    It is clear that $\lambda[v]$ is a cycle as $\partial_0$ is trivial. On the other hand, it is clear that it must be the trivial class if $\lambda=0$. Now assume that $\lambda[v]$ is the trivial class. We shall prove that $\lambda=0$.

	Assume that $X$ has $n+1$ vertices, namely $V = \{x_0 = v, x_1,\dots, x_n\}$. Then if $\lambda[x_0]$ is trivial, there exists a $1$-chain
	\[x = \sum_{i,j=0}^n \lambda_{ij} [x_i, x_j]\]
	such that $\lambda[x_0] = \partial(x)$, where we are assuming that $\lambda_{ij} = 0$ whenever $[x_i,x_j]$ is not a $1$-chain. By computing the differentials,
		\[\lambda[x_0] = \sum_{i,j=0}^n \lambda_{ij} [x_j] - \sum_{i,j=0}^n \lambda_{ij} [x_i].\]
	Now, since  $\big\{[x_0], [x_1], \dots, [x_n]\big\}$ is a basis of $C_0(X,R)$, the coefficients of each element in the basis must be equal in both sides of the equality. Namely, for each $j \neq 0$, $0 = \sum_{i=0}^n (\lambda_{ij} - \lambda_{ji})$, and $\lambda =  \sum_{i=0}^n (\lambda_{i0} - \lambda_{0i})$. Adding these equations, we deduce that $\lambda = 0$.
\end{proof}

\begin{proposition}\label{proposition:H0andConnectivity}
    Let $X$ be a directed simplicial complex and let $R$ be a commutative ring. Then $H_0(X,R) \cong R^k$, where $k$ is the number of (weakly) connected components of $X$. Furthermore, if $R\in\{\mathbb{Z},\mathbb{Q},\mathbb{R}\}$, then $H_0(X,R) = H_0^{\Dir}(X,R)\cong R^k$.
\end{proposition}

\begin{proof}
    By Proposition \ref{proposition:connectedComponents} we only need to show that if $X$ is connected, then $H_0(X,R)\cong R$. Let $v$, $w$ be any two vertices and let us show that $[v]$ and $[w]$ are homologous $0$-cycles. Indeed, since $X$ is connected, we can find vertices $v = v_0, v_1,\dots, v_m = w$ such that either $[v_i, v_{i+1}]$ or $[v_{i+1},v_i]$ are $1$-chains, for all $i = 0,1,\dots,n-1$. If either $[v_i, v_{i+1}]$ or $[v_{i+1}, v_i]$ are chains, then $[v_i]$ and $[v_{i+1}]$ are homologous. As this holds for every $i = 0,1,\dots,{m-1}$, $[v]$ is homologous to $[w]$. Now, if $\sum_{i=1}^n\lambda_i [x_i]$ is any $0$-cycle, it is homologous to $\sum_{i=1}^n \lambda_i [v]$. Then, by Lemma \ref{lemma:verticesneverboundaries}, $\lambda[v]$ is not homologous to $\mu[v]$ whenever $\lambda\ne \mu$, thus $H_0(X,R) \cong R$.
    
    For the second statement, if $R\in\{\mathbb{Z},\mathbb{Q},\mathbb{R}\}$, we can compute $H_n^{\Dir}(X,R)$. In such case, $[v]$ is a directed cycle, thus $\langle [v]\rangle = H_0(X,R)\subseteq H_0^{\Dir}(X,R)$. It then follows that $H_0(X,R) = H_0^{\Dir}(X,R)$. 
\end{proof}

\begin{remark}
The fact that 0th directed homology ignores edge directions seems counter-intuitive, as we set up this framework to detect directed features, namely directed cycles. However, this is a natural consequence of homology being an equivalence relation. In particular, as mentioned in the proof, if either $[u,v]$ or $[v,u]$ are $1$-chains, $[u]$ and $[v]$ are homologous, thus directionality of the $1$-chains does not affect 0th directed homology. However, this only occurs in dimension $0$, as a consequence of $0$-chains always being cycles. Thus, when working with 1-simplices, the symmetry in the homology relation does not affect the detection of asymmetry in the data, as such asymmetry is encoded in the cycles themselves. It is also worth mentioning that a persistent homology able to detect strongly connected components in directed graphs has been introduced in \cite{Tur19}.
\end{remark}

We now show that the homology of a point is trivial for positive indices.

\begin{example}\label{example:pointAcyclic}
	Let $R$ be a commutative ring and let $X$ be the directed simplicial complex with vertex set $V = \{x_0\}$ and simplices $X=\{(x_0)\}$. Then,
	\[H_n(X,R) = \begin{cases}
		R, & \text{if $n = 0$},\\
		0, & \text{if $n > 0$.}
	\end{cases}\]
	Indeed, $X$ is connected, thus $H_0(X,R) = R$ by Proposition \ref{proposition:H0andConnectivity}. For $n\ge 1$, there are no $n$-chains, hence $H_n(X,R)=0$. In this case, it is also immediate to check that if $R\in\{\mathbb{Z},\mathbb{Q},\mathbb{R}\}$, $H_n^{\Dir}(X,R) = H_n(X,R)$, for every $n$.
\end{example}

Directed simplicial complexes whose homology is isomorphic to that of the point are called \emph{acyclic}. 

\begin{definition}\label{definition:acyclic}
	A directed simplicial complex $X$ is \emph{acyclic} if
		\[H_n(X,R) = \begin{cases}
		R, & \text{if $n = 0$},\\
		0, & \text{if $n > 0$.}
	\end{cases}\]
	If $R\in\{\mathbb{Z},\mathbb{Q},\mathbb{R}\}$, we say that $X$ is \emph{directionally acyclic} if
	\[H_n^{\Dir}(X,R) = \begin{cases}
		R, & \text{if $n = 0$},\\
		0, & \text{if $n > 0$.}
	\end{cases}\]
	Note that acyclic implies directionally acyclic, as directed homology is a submodule of the module of homology, for every $n$.
\end{definition}

Next, we show that a directed simplicial complex consisting of one $m$-dimensional simplex along with all of its faces (note that this is an ordered-set complex) is acyclic.

\begin{proposition}\label{proposition:simplicesAcyclic}
    Let $X$ be the directed simplicial complex consisting of the simplex $(x_0,x_1,\dots,x_m)$ and all of its faces. Then $X$ is acyclic.
\end{proposition}

\begin{proof}
    As a consequence of how $X$ is defined, if $x\in C_n(X,R)$ is a chain so that $x_0$ does not participate in any of its elementary $n$-chains (as there are no repetitions in the simplices of $X$), then $x_0$ can be added as the first element of every elementary $n$-chain in $x$, giving us an $(n+1)$-chain which we denote $x_0 x\in C_{n+1}(X,R)$. Simple computations show that
    \[\partial(x_0 x) = x - x_0\partial(x).\]
    
    Now take $x \in Z_n(X,R)$ any cycle and decompose it as $x = x_0 y + z$, where $x_0$ does not participate in any of the chains in either $y$ or $z$. Using the formula above,
    \[\partial(x) = \partial(x_0 y + z) = y - x_0\partial(y) + \partial(z) = 0.\]
    In particular, the chains in which $x_0$ does not participate must add up to zero, namely $y + \partial(z) = 0$. Finally, consider the chain $x_0 z$. We have that
    \[\partial(x_0 z) = z - x_0\partial(z) = z + x_0 y = x.\]
    Thus, $x \in Z_n(X,R)$ is homologous to zero, and the result follows.
\end{proof}

We now illustrate the ability of this homology theory to detect directed cycles. We do so through some simple examples.

\begin{example}\label{example:twoTriangles}
    Let $R \in \{\mathbb{Z}, \mathbb{Q}, \mathbb{R} \}$. Let $X$, $Y$ and $Z$ be the directed simplicial complexes represented in Figure \ref{figure:triangles}. These three complexes are connected, so their $0$th homologies are $R$, both directed and undirected. Furthermore, neither $X$ nor $Y$ have $k$-simplices for $k\ge 2$, so $H_k(X,R)$ and $H_k(Y,R)$ are trivial (zero) for every $k\ge 2$. This in particular implies that $H_k^{\Dir}(X,R)$ and $H_k^{\Dir}(Y,R)$ are trivial as well. Although $Z$ has one $2$-simplex, it is not a $2$-cycle, thus $H_2(Z,R)$ is also trivial.
	\begin{figure}
		\begin{center}
			\begin{tikzpicture}
				\tikzset{node distance = 1.5cm, auto}
				\node[style=point,label={[label distance=0cm]270:$v_1$}](v1) {};
				\node[style=point, right of = v1, label={[label distance = 0cm]270:$v_2$}](v2) {};
				\node[style=point, above of = v1, xshift=0.75cm, yshift = -0.2 cm, label={[label distance = 0cm]90:$v_3$}](v3) {};
				\draw[->, thick, >=stealth] (v1) to node{} (v2);
				\draw[->, thick, >=stealth] (v1) to node{} (v3);
				\draw[->, thick, >=stealth] (v2) to node{} (v3);
				\node[below of = v3, yshift=-0.5cm] {$X$};
				\node[style=point, right of = v2, xshift = 1cm, label={[label distance = 0cm]270:$w_1$}](w1) {};
				\node[style=point, right of = w1, label={[label distance = 0cm]270:$w_2$}](w2) {};
				\node[style=point, above of = w1, xshift=0.75cm, yshift = -0.2 cm, label={[label distance = 0cm]90:$w_3$}](w3) {};
				\draw[->, thick, >=stealth] (w1) to node{} (w2);
				\draw[->, thick, >=stealth] (w3) to node{} (w1);
				\draw[->, thick, >=stealth] (w2) to node{} (w3);
				\node[below of = w3, yshift=-0.5cm] {$Y$};
    			\node[style=point,label={[label distance=0cm]270:$u_1$},right of = w2,xshift=1cm](u1) {};
    			\node[style=point, right of = u1, label={[label distance = 0cm]270:$u_2$}](u2) {};
    			\node[style=point, above of = u2, xshift=0.75cm, yshift = -0.2 cm, label={[label distance = 0cm]90:$u_3$}](u3) {};
                \node[style=point, left of = u3, label={[label distance = 0cm]90:$u_4$}] (u4) {};
                \draw[->,thick,>=stealth] (u1) to node {} (u2);
                \draw[->,thick,>=stealth] (u2) to node {} (u3);
                \draw[->,thick,>=stealth] (u3) to node {} (u4);
                \draw[->,thick,>=stealth] (u4) to node {} (u1);
                \draw[->,thick,>=stealth] (u2) to node {} (u4);
                \begin{pgfonlayer}{background}
                \path[fill=gray!50,opacity=.7] (u3.center) to (u2.center) to (u4.center);				\node[below of = u4, yshift=-0.5cm] {$Z$};
                \end{pgfonlayer}
			\end{tikzpicture}
			\caption{Directed homology detects directed $1$-cycles modulo boundaries. In these examples, $H_1^{\Dir}(X, R)=0$ while $H_1^{\Dir}(Y,R)=H_1^{\Dir}(Z,R)=R$ respectively generated by $[w_1,w_2]+[w_2,w_3]+[w_3,w_1]$ and $[u_1,u_2]+[u_2,u_4]+[u_4,u_1]$ (or $[u_1,u_2]+[u_2,u_3]+[u_3,u_4]+[u_4,u_1]$). To be a directed 1-cycle, the directions of the involved $1$-simplices must form circuit. This does not hold for (undirected) homology.}
			\label{figure:triangles}
		\end{center}
	\end{figure}
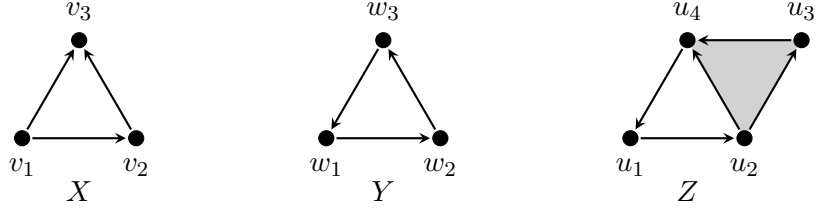
	To compute the first homology, we need to consider the $1$-simplices and their differentials. We list them below.
    \begin{center}
        \begin{tabular}{c|ccc|ccc|c}
            $X$ & $\partial$ & \hspace{1cm} & $Y$ & $\partial$ &  \hspace{1cm} & $Z$ & $\partial$ \\\cline{1-2}\cline{4-5}\cline{7-8}
            $[v_1,v_2]$ & $v_2-v_1$ & & $[w_1,w_2]$ & $w_2-w_1$ & & $[u_1,u_2]$ & $u_2-u_1$\\\cline{1-2}\cline{4-5}\cline{7-8}
            $[v_2,v_3]$ & $v_3-v_2$ & & $[w_2,w_3]$ & $w_3-w_2$ & & $[u_2,u_3]$ & $u_3-u_2$\\\cline{1-2}\cline{4-5}\cline{7-8}
            $[v_1,v_3]$ & $v_3-v_1$ & & $[w_3,w_1]$ & $w_1-w_3$ & & $[u_3,u_4]$ & $u_4-u_3$\\\cline{7-8}
            \multicolumn{6}{c}{} & $[u_4,u_1]$ & $u_1-u_4$\\\cline{7-8}
             \multicolumn{6}{c}{} & $[u_2,u_4]$ & $u_4-u_2$
        \end{tabular}
    \end{center}
    We start with $X$. Note that $\lambda_1 [v_1,v_2] + \lambda_2[v_2,v_3]+\lambda_3[v_1,v_3]$, $\lambda_1,\lambda_2,\lambda_3\in R$, is a cycle for X if and only if $\lambda_1 = \lambda_2 = - \lambda_3$. Since there are no $2$-cycles in $X$, different $1$-cycles cannot be homologous. We deduce that $H_1(X,R) \cong R$. However, no $1$-cycle can have non-negative coefficients for the three elementary $1$-cycles, thus $H_1^{\Dir}(X,R) = \{0\}$.
    
    Regarding $Y$, note that $\mu_1[w_1,w_2] + \mu_2[w_2,w_3] + \mu_3[w_3,w_1]$, $\mu_1,\mu_2,\mu_3\in R$, is a cycle if and only if $\mu_1 = \mu_2 = \mu_3$. Since there are no $2$-simplices in $Y$, we deduce that $H_1(Y,R) = R$. Furthermore, the coefficients can be simultaneously positive. Namely, $[w_1,w_2] + [w_2,w_3] + [w_3,w_1]$ is a directed cycle, which generates the entirety of $H_1(X,R)$. We deduce that $H_1^{\Dir}(X,R)\cong R$. Note that $X$ and $Y$ have isomorphic homology rings, but their directed homology rings are different.
     
    Finally, for $Z$, similar computations show that $Z_1(Z,R)$ is the free $R$-module generated by $x_1 = [u_1,u_2] + [u_2,u_3] + [u_3,u_4] + [u_4, u_1]$ and $x_2 = [u_1,u_2] + [u_2,u_4] + [u_4, u_1]$, both of which are directed cycles. In particular, $Z_1^{\Dir}(Z,R)$ consists of linear combinations of those two cycles, with positive coefficients. Note, however, that generating sets of $Z_1(X,R)$ containing fewer than two directed cycles exist. This shows that computing a generating set for the cycles may not be enough to compute the full set of directed cycles. 
    
    Continuing with the homology computations, note that in this case we have a $2$-simplex, $y=[u_2,u_3,u_4]$, for which $\partial(y) = [u_3,u_4] - [u_2,u_4] + [u_2, u_3] = x_1 - x_2$. Namely, $x_1$ and $x_2$ are homologous. Therefore, $H_1(Z,R) = H_1^{\Dir}(Z,R) = R$, generated by either $x_1$ or $x_2$. In particular, both the directed and undirected homologies of $Y$ and $Z$ are isomorphic and we can see how $2$-simplices can make directed cycles equivalent, as expected.
\end{example}

More generally, if $R\in \{\mathbb{Z},\mathbb{Q},\mathbb{R}\}$, a \emph{polygon} will give rise to a non-trivial directed homology class in dimension $1$ if, and only if, the cycle can be traversed following the direction of the edges. 

\begin{proposition}\label{proposition:homologyPolygons}
    Let $X$ be a $1$-dimensional directed simplicial complex with vertices $v_0,v_1,\dots,v_n$, $n\ge 1$, and with $1$-simplices $e_0$, $e_1,\dots, e_n$ where either $e_i = (v_i, v_{i+1})$ or $e_i = (v_{i+1}, v_i)$, $i=0,1,\dots,n$, and we write $v_{n+1}=v_0$. Then, $H_1(X,R) = R$. If $R\in \{\mathbb{Z},\mathbb{Q},\mathbb{R}\}$, then $H_1^{\Dir}(X,R)$ is non-trivial if and only if either $e_i = (v_i,v_{i+1})$ for all $i$ or $e_i = (v_{i+1},v_i)$ for all $i$, in which case $H_1^{\Dir}(X,R)\cong R$. 
\end{proposition}

\begin{proof}
    We first prove that $H_1(X,R) = R$. Let $x = \sum_{i=0}^n \lambda_i [e_i]$ be a cycle. If $x$ is non-trivial, we can assume without loss of generality that $\lambda_0\ne 0$. We can further assume that $e_0 = (v_0, v_1)$. Thus, $\partial(\lambda_0 [e_0]) = \lambda_0[v_1] - \lambda_0[v_0]$. The only other edge in which $e_1$ participates is $e_1$. We deduce that $\lambda_1 = \lambda_0$, if $e_1 = (v_1, v_2)$, or $\lambda_1 = -\lambda_0$, if $e_1 = (v_2, v_1)$. By proceeding iteratively, we deduce that $\lambda_i = \lambda_0$, if $e_i = (v_i, v_{i+1})$, or that $\lambda_i = -\lambda_0$, if $e_i = (v_{i+1}, v_i)$. It is now straightforward to check that, with $\lambda_i$ defined in terms of $\lambda_0$ in such way, $i = 1,\dots,n$, $x = \sum_{i=0}^n \lambda_i [e_i]$ is a non-trivial cycle. Since there are no $2$-chains, $H_1(X,R) \cong R$.

    Regarding the directed homology, note that if $e_0 = (v_0, v_1)$, the coefficients of the cycle $x$ can only be all positive if and only if $e_i = (v_i, v_{i+1})$. In such case, $H_1^{\Dir}(X,R) = H_1(X,R) \cong R$, and otherwise $H_1^{\Dir}(X,R) = 0$. The case in which $e_0 = (v_1, v_0)$ is analogous.
\end{proof}

\begin{remark}
    Note that Proposition \ref{proposition:homologyPolygons} applies to polygons with only two vertices $v_1$ and $v_2$ (see Figure \ref{figure:twoVertexCycle}), which are allowed in a directed simplicial complex. Indeed, an immediate computation shows that $[v_1,v_2] + [v_2,v_1]$ is a directed 1-cycle. This result shows that directed homology is not a homotopy invariant for the geometric realisation of directed simplicial complexes, as all of the polygons considered in Proposition \ref{proposition:homologyPolygons} give rise to homotopic geometric realisations.
\end{remark}

\begin{figure}
    \begin{center}
        \tikzset{node distance = 1.5cm, auto}
        \begin{tikzpicture}
    			\node[style=point,label={[label distance=0cm]270:$v_1$}](v1) {};
    			\node[style=point, right of = v1, label={[label distance = 0cm]270:$v_2$}](v2) {};
                \draw[->,thick,>=stealth] (v1) to[out = 30, in = 150] node {} (v2);
    			\draw[->,thick,>=stealth] (v2) to[out = 210, in = -30] node {} (v1);
    	\end{tikzpicture}
        \caption{A directed simplicial complex with just two vertices can have a non-trivial $1$-cycle.}
        \label{figure:twoVertexCycle}
    \end{center}
\end{figure}
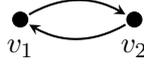

We end this section by noting that, although our initial aim was a directed homology group able to detect directed $1$-cycles, we have a directed homology theory defined in all dimensions. However, we can show that the directed homology is trivial in even dimensions other than 0.

\begin{proposition}\label{proposition:evenDimTrivialHomology}
    Let $X$ be a directed simplicial complex $R\in \{\mathbb{Z},\mathbb{Q},\mathbb{R}\}$. Then $H_{2n}^{\Dir}(X,R)=\{0\}$, for every $n\ge 1$.
\end{proposition}

\begin{proof}
    We will show that no non-trivial directed cycles may exist in such dimensions. In order to do so, consider the morphism of $R$-modules
    \begin{align*}
        \varphi_n\colon C_n(X,R)&\longrightarrow R\\
        [v_0,v_1,\dots,v_n]&\longmapsto 1.
    \end{align*}
    Thus, if $x = \sum_{i=0}^k \lambda_i x_i$ where $x_i$ is an elementary $n$-chain, $\varphi_n(x)=\sum_{i=0}^k \lambda_i$.
    
    Now assume that $x_i$ is an elementary $2n$-chain, for some $n\ge 1$. In this case, $\partial(\lambda_i x_i) = \lambda_i\partial(x_i)$, where $\partial(x_i)$ consists of the sum of $\lfloor\frac{2n}{2}\rfloor = n$ elementary $n$-chains with positive sign and $\lfloor\frac{2n-1}{2}\rfloor = n-1$ elementary $n$-chains with negative sign. Thus, $\varphi_{2n-1}(\lambda_i x_i) = \lambda_i (n - (n - 1)) = \lambda_i$.
    
    To prove our claim, let $x = \sum_{i=0}^k \lambda_i x_i\in Z_{2n}(X,R)$ be a cycle where each $x_i$ is an elementary $2n$-cycle. Then, given that $\partial(x) = 0$, $\varphi_{2n-1}\big(\partial(x)\big) = \sum_{i=0}^k \lambda_i = 0$. If $x$ is directed, then $\lambda_i \ge 0$, for all $i$, thus we deduce that $\lambda_i = 0$, for all $i$. Namely, the trivial cycle is the only directed cycle, and $H_{2n}^{\Dir}(X,R)=\{0\}$.
\end{proof}

\begin{remark}
    The proof of this result is based on the fact that the number of positive coefficients in the boundary of an elementary $n$-chain is larger than the number of negative coefficients, if $n$ is even. This may also be related to the fact that there is no obvious way to define directed $n$-cycles for $n>1$. For the purposes of this article, it suffices to consider 1-cycles and $H_1(X,R)$. However, note that non-trivial (homological) cycles do exist in all odd dimensions. For instance, the elementary $(2n-1)$-chain obtained by repeating a vertex $2n$ times is always a cycle.
\end{remark}

\section{Persistent directed homology}\label{section:directedPersistentHomology}

In this section, we introduce a theory of persistent homology for directed simplicial complexes which comes in two flavours: one takes into account the directionality of the complex, whereas the other one is analogous to persistent homology in the classical setting. We thus have, associated to the same filtration of directed simplicial complexes, two persistence modules which produce two different barcodes. Both persistent homology theories show stability (see Theorem \ref{theorem:stability}) and they are closely related. 
Indeed, directed cycles are undirected cycles as well, thus every bar in a directed persistence barcode can be uniquely matched with a bar in the corresponding undirected one, although the undirected bar may be born sooner, see Proposition \ref{proposition:subbarcode}. 

\subsection{Persistence modules associated to a directed simplicial complex}
Let us begin by introducing filtrations of directed simplicial complexes.

\begin{definition}
	Let $X$ be a directed simplicial complex (Definition \ref{definition:directedSimplicialComplex}). A \emph{filtration} of $X$ is a family of subcomplexes $(X_\delta)_{\delta\in T}$, $T\subseteq\mathbb{R}$, such that if $\delta\le \delta'\in T$, then $X_\delta$ is a subcomplex of $X_{\delta'}$, and such that $X = \cup_{\delta\in T} X_\delta$. Note that for $\delta\le \delta'$, the inclusion $i_\delta^{\delta'}\colon X_\delta\to X_\delta'$ is a morphism of directed simplicial complexes.	
\end{definition}

We now introduce undirected persistence modules.

\begin{definition}\label{definition:undirPersistenceModFilt}
	Let $(X_\delta)_{\delta\in T}$ be a filtration of a directed simplicial complex $X$ and let $R$ be a commutative ring. The $n$-dimensional \emph{undirected persistence $R$-module} of $X$ is the persistence $R$-module
        \[(\{H_n(X_\delta, R)\}, \{H_n(i_\delta^{\delta'})\}\big)_{\delta\le\delta'\in T}.\]
	The functoriality of $H_n$ makes this a persistence module.
\end{definition}

Now, in order to retain the information on directionality, we take the submodule of directed classes.

\begin{definition}\label{definition:dirPersistenceModFilt}
	Let $(X_\delta)_{\delta\in T}$ be a filtration of a directed simplicial complex $X$ and let $R\in \{\mathbb{Z},\mathbb{Q},\mathbb{R}\}$. The $n$-dimensional \emph{directed persistence $R$-module} of $X$ is the persistence $R$-module
	\[ \big( \{H_n^{\Dir}(X_\delta,R)\}, \{H_n^{\Dir}(i_\delta^{\delta'})\}\big)_{\delta\le\delta'\in T},\]
    where $H_n^{\Dir}(X_\delta,R)$ are defined as in Definition \ref{definition:directed-homology}, $i_\delta^{\delta'}\colon X_\delta\to X_{\delta'}$ are the inclusion maps, and $H_n^{\Dir}(i_\delta^{\delta'})$ are the restrictions of the maps $H_n(i_\delta^{\delta'})$ to directed homology, as introduced in Proposition \ref{proposition:degmorphism}. 
\end{definition}

\begin{remark}\label{remark:persistenceSubmodule}
    Let $(X_\delta)_{\delta\in T}$ be a filtration of a directed simplicial complex $X$ and let $\delta\le\delta'\in T$. If $R\in \{\mathbb{Z},\mathbb{Q},\mathbb{R}\}$, by applying Proposition \ref{proposition:degmorphism} to the map $i_\delta^{\delta'}\colon X_\delta\to X_{\delta'}$, we have a commutative diagram
    \begin{center}
    \begin{tikzpicture}
    \tikzset{node distance=2cm, auto}
    	\node (1) {$H_n(X_\delta,R)$};
    	\node[right of=1,xshift = 3cm] (2) {$H_n(X_{\delta'},R)$};
    	\node[below of = 1] (3) {$H_n^{\Dir}(X_\delta,R)$};
    	\node[below of = 2] (4)
    	{$H_n^{\Dir}(X_{\delta'},R)$.};
    	\draw[->] (1) to node {$H_n(i_\delta^{\delta'})$} (2);
    	\draw[->,swap] (3) to node {$H_n^{\Dir}(i_\delta^{\delta'})$} (4);
    	\draw[right hook->] (3) to node {} (1);
    	\draw[right hook->] (4) to node {} (2);
    \end{tikzpicture}
    \end{center}
    In particular, the injections
	$\big\{H_{n}^{\Dir}(X_\delta,R)\hookrightarrow H_n(X_\delta,R)\big\}_{\delta\in T}$
	give rise to a monomorphism of persistence modules.
\end{remark}

We can now introduce persistence diagrams and barcodes associated to filtrations of directed simplicial complexes. As these were only introduced for fields in Section \ref{section:persistentHomologyIntro}, from this point on, we will assume that $R$ is a field. In particular, when taking directed homology, we will assume that $R\in\{\mathbb{Q},\mathbb{R}\}$.

\begin{definition}
    Let $(X_\delta)_{\delta\in T}$ be a filtration of a directed simplicial complex $X$ where $T\subseteq\mathbb{R}$ is finite. The \emph{persistence diagrams} associated to the $n$-dimensional undirected and directed persistence $R$-modules of $X$ are respectively denoted $\Dgm_n(X,R)$ and $\Dgm_{n}^{\Dir}(X,R)$. Similarly, the respective \emph{barcodes} are denoted $\Pers_n(X,R)$ and $\Pers_{n}^{\Dir}(X,R)$.
\end{definition}

\begin{remark}
	Note that $H_0^{\Dir}(X_\delta,R)\cong H_0(X_\delta,R)$ for every $\delta\in T$, as shown in Proposition \ref{proposition:H0andConnectivity}. As a consequence, the 0-dimensional directed and undirected persistence $R$-modules of a directed simplicial complex $X$ are isomorphic. In particular, they have the same persistence barcodes and diagrams, and they measure the connectivity of the simplicial complex at each stage of the filtration.
\end{remark}

The next result establishes the relation between the undirected and directed persistence barcodes and diagrams of a persistence module, and it is thus key to understanding the directed persistence barcodes.

\begin{proposition}\label{proposition:subbarcode}
	Let $(X_\delta)_{\delta\in T}$ be a filtration of directed simplicial complexes. Then, there is an injective matching sending each bar in $\Pers_n^{\Dir}(X,R)$ to a bar in $\Pers_n(X,R)$ which contains it. Furthermore, matched bars must die at the same time.
\end{proposition}
	
\begin{proof}
	This result is an immediate consequence of Remark \ref{remark:persistenceSubmodule} and \cite[Proposition 6.1]{BauLes14}.
\end{proof}

Note that a bar in the directed persistence barcode of a filtration may be born after the one it is matched with in the undirected one, and some bars in the undirected barcode may remain unmatched (see Examples below and Figures \ref{figure:filtrationOne} and \ref{figure:notEveryDirected}). 

We now introduce some examples to illustrate Proposition \ref{proposition:subbarcode} and the general behaviour of the directed persistence barcodes. 

\begin{example}\label{example:filtrationEquivalentCycles}
    Let us illustrate how undirected and directed persistence modules and their barcodes can be different. First, consider the directed simplicial complexes $X$ and $Y$ in Figure \ref{figure:triangles} (see Example \ref{example:twoTriangles}). Regardless of the filtration chosen for $X$, the lack of directed cycles means that the $1$-dimensional directed persistence module is trivial. However, at the end of the filtration, there is a cycle in homology, thus there is a bar in the undirected persistence barcode. In the case of $Y$, the only  $1$-cycle is directed, so the undirected and directed persistence modules associated to any filtration of $Y$ are isomorphic, and different from those of $X$.
 \end{example}
 
 \begin{example}
    Consider now the directed simplicial complex $Z$ in Figure \ref{figure:triangles} (see Example \ref{example:twoTriangles}). Let $T=\{0,1,2,\dots\}$ and consider the filtration of $Z$ given by $(Z_\delta)_{\delta\in T}$, as illustrated in Figure \ref{figure:filtrationOne}, where $Z_\delta = Z$ for every $\delta \ge 3$.
        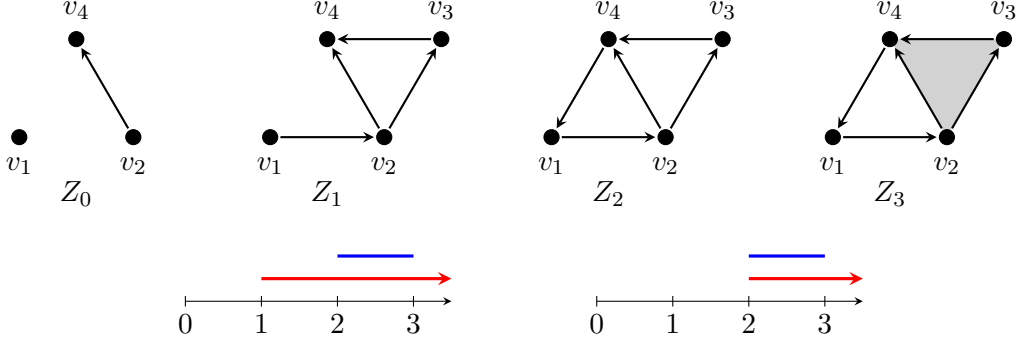
\begin{figure}
        \centering
        \begin{tikzpicture}
            \tikzset{node distance = 1.5cm, auto}
            \node[style=point,label={[label distance=0cm]270:$v_1$}](v1X1) {};
            \node[style=point, right of = v1X1, label={[label distance = 0cm]270:$v_2$}](v2X1) {};
			\node[style=point, above of = v1X1, xshift=0.75cm, yshift = -0.2 cm, label={[label distance = 0cm]90:$v_4$}](v4X1) {};
			\draw[->,thick,>=stealth] (v2X1) to node {} (v4X1);
			\node[below of = v4X1, yshift = -0.55cm] {$Z_0$};
			\node[style=point,right of = v2X1,xshift=0.3cm,label={[label distance=0cm]270:$v_1$}](v1X2) {};
            \node[style=point, right of = v1X2, label={[label distance = 0cm]270:$v_2$}](v2X2) {};
			\node[style=point, above of = v1X2, xshift=0.75cm, yshift = -0.2 cm, label={[label distance = 0cm]90:$v_4$}](v4X2) {};
			\node[style=point, right of = v4X2, label={[label distance = 0cm]90:$v_3$}](v3X2) {};
			\draw[->,thick,>=stealth] (v1X2) to node {} (v2X2);
           \draw[->,thick,>=stealth] (v3X2) to node {} (v4X2);
           \draw[->,thick,>=stealth] (v2X2) to node {} (v4X2);
           \draw[->,thick,>=stealth] (v2X2) to node {} (v3X2);
			\node[below of = v4X2, yshift = -0.55cm] {$Z_1$};
			\node[style=point,right of = v2X2,xshift = 0.7cm,label={[label distance=0cm]270:$v_1$}](v1X3) {};
            \node[style=point, right of = v1X3, label={[label distance = 0cm]270:$v_2$}](v2X3) {};
			\node[style=point, above of = v1X3, xshift=0.75cm, yshift = -0.2 cm, label={[label distance = 0cm]90:$v_4$}](v4X3) {};
			\node[style=point, right of = v4X3, label={[label distance = 0cm]90:$v_3$}](v3X3) {};
			\draw[->,thick,>=stealth] (v1X3) to node {} (v2X3);
           \draw[->,thick,>=stealth] (v4X3) to node {} (v1X3);
           \draw[->,thick,>=stealth] (v2X3) to node {} (v4X3);
           \draw[->,thick,>=stealth] (v2X3) to node {} (v3X3);
           \draw[->,thick,>=stealth] (v3X3) to node {} (v4X3);
			\node[below of = v4X3, yshift = -0.55cm] {$Z_2$};
			\node[style=point,right of = v2X3,xshift = 0.7cm,label={[label distance=0cm]270:$v_1$}](v1X4) {};
            \node[style=point, right of = v1X4, label={[label distance = 0cm]270:$v_2$}](v2X4) {};
			\node[style=point, above of = v1X4, xshift=0.75cm, yshift = -0.2 cm, label={[label distance = 0cm]90:$v_4$}](v4X4) {};
			\node[style=point, right of = v4X4, label={[label distance = 0cm]90:$v_3$}](v3X4) {};
			\draw[->,thick,>=stealth] (v1X4) to node {} (v2X4);
           \draw[->,thick,>=stealth] (v4X4) to node {} (v1X4);
           \draw[->,thick,>=stealth] (v2X4) to node {} (v4X4);
           \draw[->,thick,>=stealth] (v2X4) to node {} (v3X4);
           \draw[->,thick,>=stealth] (v3X4) to node {} (v4X4);
			\node[below of = v4X4, yshift = -0.55cm] {$Z_3$};
          \begin{pgfonlayer}{background}
           \path[fill=gray!50,opacity=.7] (v3X4.center) to (v2X4.center) to (v4X4.center);
           \end{pgfonlayer}
        \end{tikzpicture}
        \bigskip\medskip
        
        \begin{tikzpicture}
            \node at (0,-0.3){0};
            \node at (1,-0.3){1};
            \node at (2,-0.3){2};
            \node at (3,-0.3){3};
            \draw[->, >=stealth] (0,0)--(3.5,0);
            \draw[-] (0,-0.07)--(0,0.07);
            \draw[-] (1,-0.07)--(1,0.07);
            \draw[-] (2,-0.07)--(2,0.07);
            \draw[-] (3,-0.07)--(3,0.07);
            \draw[->,>=stealth,very thick,color=red](1,0.3)--(3.5,0.3);
            \draw[very thick,color=blue](2,0.6)--(3,0.6);
       \end{tikzpicture}
       \qquad\qquad
       \begin{tikzpicture}
            \node at (0,-0.3){0};
            \node at (1,-0.3){1};
            \node at (2,-0.3){2};
            \node at (3,-0.3){3};
            \draw[->, >=stealth] (0,0)--(3.5,0);
            \draw[-] (0,-0.07)--(0,0.07);
            \draw[-] (1,-0.07)--(1,0.07);
            \draw[-] (2,-0.07)--(2,0.07);
            \draw[-] (3,-0.07)--(3,0.07);
            \draw[->,>=stealth,very thick,color=red](2,0.3)--(3.5,0.3);
            \draw[very thick,color=blue](2,0.6)--(3,0.6);
       \end{tikzpicture}
        \caption{A filtration of the directed simplicial complex $Z$ in Example \ref{example:twoTriangles} (Figure \ref{figure:triangles}) and its associated undirected (bottom, left) and directed (bottom, right) $1$-dimensional persistence barcodes. The shorter undirected bar (blue) is also directed, while the longer undirected bar (red) becomes directed at $\delta=2$.}
        \label{figure:filtrationOne}
    \end{figure}
    The undirected and directed $1$-dimensional persistence modules of this filtration are not isomorphic. Indeed, in $Z_1$ there is clearly an undirected cycle, whereas $Z_1^{\Dir}(Z_1,R)$ is trivial, thus $H_1^{\Dir}(Z_1,R)=\{0\}$. However, $H_1(Z_2,R)\cong H_1^{\Dir}(Z_2,R)$, as both vector spaces are generated by the classes of $[v_1,v_2]+[v_2,v_3]+[v_3,v_4]+[v_4,v_1]$ and $[v_2,v_4]+[v_4,v_1]+[v_1,v_2]$. These two classes become equivalent in $Z_3$. The undirected and directed $1$-dimensional persistence barcodes of this filtration, shown in Figure \ref{figure:filtrationOne}, illustrate how undirected classes may become directed. 
\end{example}

The last example also shows an important difference between classical and directed persistence modules and barcodes. Namely, in the undirected setting, the addition of one simplex to the filtration can either cause the birth of a class in the dimension of the added simplex, or kill a class in the preceding dimension. This simple idea is in fact the basis of the Standard Algorithm for computing persistent homology. However, the addition of only one simplex to a directed simplicial complex can cause the birth of several classes in directed homology, as shown in the previous example at $\delta=2$, and also in the following example. 

\begin{example}\label{example:oneEdgeSeveralCycles}
    Let $T=\{0,1,2,\dots\}$ and consider the filtration of simplicial complexes $(X_\delta)_{\delta\in T}$ illustrated in Figure \ref{figure:severalAtOnce} in the Introduction, where $X_\delta = X_3$ for every $\delta \ge 3$.
  
    It is clear that $Z_1^{\Dir}(X_j,R)$ is trivial for $j=0,1,2$, whereas undirected cycles appear as early as $X_1$. However, by adding the edge from $v_5$ to $v_1$, several directed cycles are born at once. Namely, $Z_1^{\Dir}(X_3,R)$ contains the cycles
    \begin{align*}
        &[v_1,v_2] + [v_2,v_3] + [v_3,v_4] + [v_4, v_5] + [v_5,v_1],\\
        &[v_1,v_2] + [v_2,v_4] + [v_4, v_5] + [v_5,v_1],\\
        &[v_1,v_2] + [v_2,v_3] + [v_3, v_5] + [v_5,v_1],\\
        &[v_1,v_3] + [v_3,v_4] + [v_4,v_5] + [v_5,v_1],\\
        &[v_1,v_3] + [v_3,v_5] + [v_5,v_1].
    \end{align*}
    Straightforward computations show that $H_1(X_3,R) = R^4$, and four of the cycles above are linearly independent, thus $H_{1}^{\Dir}(X_3,R) = H_{1}(X_3,R) = R^4$. Consequently, at $\delta = 3$ every class (including the birthing one) becomes directed.
\end{example}

Our last example shows an undirected homology class that never becomes directed.

\begin{example}\label{example:remainsUnmatched}
    Let $T=\{0,1,2,\dots\}$ and consider the filtration of simplicial complexes $(X_\delta)_{\delta\in T}$ illustrated in Figure \ref{figure:notEveryDirected}, where $X_\delta = X_2$ for $\delta\ge 2$.
    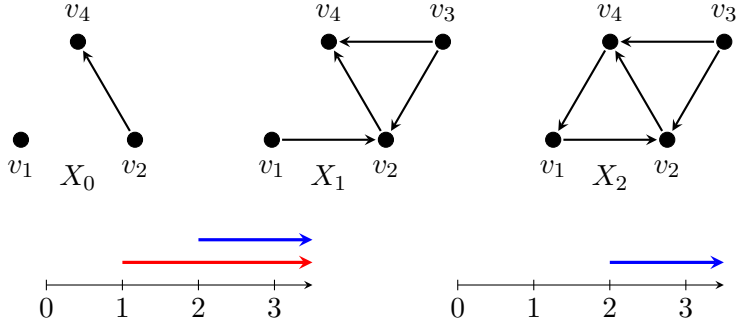
\begin{figure}
        \centering
        \begin{tikzpicture}
            \tikzset{node distance = 1.5cm, auto}
            \node[style=point,label={[label distance=0cm]270:$v_1$}](v1X1) {};
            \node[style=point, right of = v1X1, label={[label distance = 0cm]270:$v_2$}](v2X1) {};
			\node[style=point, above of = v1X1, xshift=0.75cm, yshift = -0.2 cm, label={[label distance = 0cm]90:$v_4$}](v4X1) {};
			\draw[->,thick,>=stealth] (v2X1) to node {} (v4X1);
			\node[below of = v4X1, yshift = -0.3cm] {$X_0$};
			\node[style=point,right of = v2X1,xshift=0.3cm,label={[label distance=0cm]270:$v_1$}](v1X2) {};
            \node[style=point, right of = v1X2, label={[label distance = 0cm]270:$v_2$}](v2X2) {};
			\node[style=point, above of = v1X2, xshift=0.75cm, yshift = -0.2 cm, label={[label distance = 0cm]90:$v_4$}](v4X2) {};
			\node[style=point, right of = v4X2, label={[label distance = 0cm]90:$v_3$}](v3X2) {};
			\draw[->,thick,>=stealth] (v1X2) to node {} (v2X2);
           \draw[->,thick,>=stealth] (v3X2) to node {} (v4X2);
           \draw[->,thick,>=stealth] (v2X2) to node {} (v4X2);
           \draw[->,thick,>=stealth] (v3X2) to node {} (v2X2);
			\node[below of = v4X2, yshift = -0.3cm] {$X_1$};
			\node[style=point,right of = v2X2,xshift = 0.7cm,label={[label distance=0cm]270:$v_1$}](v1X3) {};
            \node[style=point, right of = v1X3, label={[label distance = 0cm]270:$v_2$}](v2X3) {};
			\node[style=point, above of = v1X3, xshift=0.75cm, yshift = -0.2 cm, label={[label distance = 0cm]90:$v_4$}](v4X3) {};
			\node[style=point, right of = v4X3, label={[label distance = 0cm]90:$v_3$}](v3X3) {};
			\draw[->,thick,>=stealth] (v1X3) to node {} (v2X3);
           \draw[->,thick,>=stealth] (v4X3) to node {} (v1X3);
           \draw[->,thick,>=stealth] (v2X3) to node {} (v4X3);
           \draw[->,thick,>=stealth] (v3X3) to node {} (v2X3);
           \draw[->,thick,>=stealth] (v3X3) to node {} (v4X3);
			\node[below of = v4X3, yshift = -0.3cm] {$X_2$};
        \end{tikzpicture}
        \bigskip\medskip
        
        \begin{tikzpicture}
            \node at (0,-0.3){0};
            \node at (1,-0.3){1};
            \node at (2,-0.3){2};
            \node at (3,-0.3){3};
            \draw[->, >=stealth] (0,0)--(3.5,0);
            \draw[-] (0,-0.07)--(0,0.07);
            \draw[-] (1,-0.07)--(1,0.07);
            \draw[-] (2,-0.07)--(2,0.07);
            \draw[-] (3,-0.07)--(3,0.07);
            \draw[->,>=stealth,very thick,color=red](1,0.3)--(3.5,0.3);
            \draw[->,>=stealth,very thick,color=blue](2,0.6)--(3.5,0.6);
       \end{tikzpicture}
       \qquad\qquad
       \begin{tikzpicture}
            \node at (0,-0.3){0};
            \node at (1,-0.3){1};
            \node at (2,-0.3){2};
            \node at (3,-0.3){3};
            \draw[->, >=stealth] (0,0)--(3.5,0);
            \draw[-] (0,-0.07)--(0,0.07);
            \draw[-] (1,-0.07)--(1,0.07);
            \draw[-] (2,-0.07)--(2,0.07);
            \draw[-] (3,-0.07)--(3,0.07);
            \draw[->,>=stealth,very thick,color=blue](2,0.3)--(3.5,0.3);
       \end{tikzpicture}
        \caption{A filtration of directed simplicial complexes and its associated undirected (bottom, left) and directed (bottom, right) $1$-dimensional persistence barcodes. The shorter undirected bar (blue) is also directed, while the longer undirected bar (red) never becomes directed.}
        \label{figure:notEveryDirected}
    \end{figure}
     Clearly, $Z_1^{\Dir}(X_j,R)=\{0\}$ for $j=0,1,$ whereas $Z_1^{\Dir}(X_2,R)$ contains the cycle $[v_1,v_2] + [v_2,v_4] + [v_4,v_1]$. The undirected class represented by this element is thus directed, but there is a linearly independent class in undirected homology, $[v_3,v_2] + [v_2,v_4] - [v_3,v_4]$, which never becomes directed. Its bar in the barcode is thus unmatched.
\end{example}

\subsection{Directed persistent homology of dissimilarity functions}

In this section, we introduce the undirected and directed persistence diagrams and barcodes associated to dissimilarity functions (Definition \ref{definition:persistenceNetwork}) and prove their stability with respect to the bottleneck distance (Theorem \ref{theorem:stability}). 

Let us begin by introducing the \emph{directed Rips filtration} of directed simplicial complexes associated to a dissimilarity function,  \cite[Definition 16]{Tur19}.

\begin{definition}\label{definition:filtrationDisNet}
	Let $(V,d_V)$ be a dissimilarity function. The \emph{directed Rips filtration} of $(V,d_V)$ is the filtration of directed simplicial complexes $\big(\mathcal{R}^{\Dir}(V,d_V)\big)_{\delta\in\mathbb{R}}$ where $(v_0,v_1,\dots,v_n)\in \mathcal{R}^{\Dir}(V,d_V)_\delta$ if and only if $d_V(v_i,v_j)\le \delta$, for all $0\le i \le j \le n$. It is clearly a filtration with the inclusion maps $i_\delta^{\delta'}\colon \mathcal{R}^{\Dir}(V,d_V)_\delta\to \mathcal{R}^{\Dir}(V,d_V)_{\delta'}$ for all $\delta\le\delta'$.
\end{definition}

Let us now introduce the persistent homology modules associated to such a filtration. Assume that $R\in\{\mathbb{Q},\mathbb{R}\}$.

\begin{definition}\label{definition:persistenceNetwork}
	Let $(V, d_V)$ be a dissimilarity function and consider its associated directed Rips filtration  $\big(\mathcal{R}^{\Dir}(V,d_V)\big)_{\delta\in\mathbb{R}}$. For each $n\ge 0$, the \emph{$n$-dimensional undirected persistence} $R$-module of $(V,d_V)$ is
	\[\mathcal{H}_n(V, d_V) := 	\big(\{H_n(\mathcal{R}^{\Dir}(V,d_V)_{\delta},R)\}, \{H_n(i_\delta^{\delta'})\}\big)_{\delta\le \delta'\in\mathbb{R}}.\]
	Similarly, the \emph{$n$-dimensional directed persistence} $R$-module of $(V,d_V)$ is defined as 
	\[\mathcal{H}_{n}^{\Dir}(V, d_V) := \big(\{H_{n}^{\Dir}(\mathcal{R}^{\Dir}(V,d_V)_{\delta},R)\}, \{H_{n}^{\Dir}(i_\delta^{\delta'})\}\big)_{\delta\le \delta'\in\mathbb{R}}.\]
\end{definition}

\begin{remark}\label{remark:TurnerApplicable}
    The persistence module $\mathcal{H}_n(V,d_V)$ associated to the directed Rips filtration of $(V,d_V)$ is precisely the persistence module studied in \cite[Section 5]{Tur19}, hence the remarks made there hold for the undirected persistence module. In particular, if $(V,d_V)$ is a (finite) metric space, $\mathcal{R}^{\Dir}(V,d_V)$ is the (classical) Vietoris-Rips filtration of $(V,d_V)$. Furthermore, in this case, it can easily be seen that $\mathcal{H}_n(V,d_V) = \mathcal{H}_n^{\Dir}(V,d_V)$. Thus, these persistence modules generalise the persistence modules associated to the Vietoris-Rips filtration of a metric space. Also note that $\mathcal{R}^{\Dir}(V,d_V)$ is \emph{closed under adjacent repeats}, \cite[ Definition 18]{Tur19}. Namely, given $(v_0,v_1,\dots,v_n)\in \mathcal{R}^{\operatorname{Dir}}(V, d_V)_\delta$, $(v_0,v_1,\dots,v_i,v_i,\dots,v_n)\in \mathcal{R}^{\operatorname{Dir}}(V, d_V)_\delta$, for any $i=0,1,\dots,n$. Using this, it can be proven that the homology of this object must be trivial in dimensions larger than $|V|+1$.
\end{remark}

As $R$ is a field and since $V$ is finite, both of these persistence modules fulfil the assumptions in Section \ref{section:persistentHomologyIntro}. Namely, their indexing sets can be chosen to be finite, corresponding to the threshold values where new simplices are added to the simplicial complex. Furthermore, no simplex is added to the filtration until the threshold value reaches the minimum of the images of the dissimilarity function. Finally, and even though the directed simplicial complex $\mathcal{R}^{\Dir}(V,d_V)_\delta$ may have infinite simplices due to arbitrary repetitions of vertices being allowed, it always has a finite number of simplices in a given dimension $n$, thus its $n$-dimensional homology is always finite-dimensional. As a consequence, we can introduce the following.

\begin{definition}\label{definition:directedPersBarcodes}
    Let $(V, d_V)$ be a dissimilarity function. For each $n\ge 0$, the $n$-dimensional \emph{persistence diagrams} associated to the persistence $R$-modules $\mathcal{H}_n(V, d_V)$ and $\mathcal{H}_{n}^{\Dir}(V, d_V)$ are respectively denoted by $\Dgm_n(V,d_V)$ and $\Dgm_{n}^{\Dir}(V,d_V)$. Similarly, their associated \emph{persistence barcodes} are denoted by $\Pers_n(V,d_V)$ and $\Pers_{n}^{\Dir}(V,d_V)$.
\end{definition}

Of course, Proposition \ref{proposition:subbarcode} holds for these barcodes, namely, every bar in $\Pers_n^{\Dir}(V,d_V)$ can be uniquely matched with one in $\Pers_n(V,d_V)$ which dies at the same time, although the directed bar may be born later.

We now use results from Section \ref{section:introAsymmetricNetrowks} to show that both these persistent homology constructions are stable. The proof is split in several lemmas. Let $(V,d_V)$ and $(W, d_W)$ be two dissimilarity functions on respective sets $V$ and $W$ and define $\eta = 2 d_{\CD}\big((V,d_V), (W,d_W)\big)$, where $d_{CD}$ is the correspondence distortion distance (Definition \ref{definition:networkDistance}). By Proposition \ref{proposition:networkDistance}, we can find maps $\varphi\colon V\to W$ and $\psi\colon W\to V$ such that $\dis(\varphi), \dis(\psi), \codis(\varphi,\psi),\codis(\psi,\varphi)\le \eta$. To simplify notation in the proofs below, denote $\mathcal{R}^{\Dir}(V,d_V)_\delta=X_V^\delta$ and $\mathcal{R}^{\Dir}(W,d_W)_\delta=X_W^\delta$, for all $\delta\in\mathbb{R}$.

\begin{lemma}\label{lemma:interlieved1}
	For each $\delta\in\mathbb{R}$, the maps $\varphi$ and $\psi$ induce morphisms of directed simplicial complexes
	\begin{align*}
		\begin{aligned}
			\varphi_\delta\colon X_V^\delta & \longrightarrow X_W^{\delta+\eta}\\
			x &\longmapsto \varphi(x),
		\end{aligned}&&
		\begin{aligned}
			\psi_\delta\colon X_W^\delta & \longrightarrow X_V^{\delta+\eta}\\
			x&\longmapsto \psi(x).
		\end{aligned}
	\end{align*}
\end{lemma}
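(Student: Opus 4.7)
The plan is to verify the defining property of a morphism of directed simplicial complexes (Definition \ref{definition:simplicialMorphism}) for the componentwise extension of $\varphi$, and then note that the argument for $\psi$ is completely symmetric. Concretely, given a simplex $(x_0, x_1, \ldots, x_n) \in X_V^\delta$, I must show that $(\varphi(x_0), \varphi(x_1), \ldots, \varphi(x_n)) \in X_W^{\delta+\eta}$.

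First I would unfold the definition of the directed Rips filtration (Definition \ref{definition:filtrationDisNet}): $(x_0, \ldots, x_n) \in X_V^\delta$ precisely when $d_V(x_i, x_j) \le \delta$ for every pair of indices $0 \le i \le j \le n$. The goal then becomes the bound $d_W(\varphi(x_i), \varphi(x_j)) \le \delta + \eta$ for all such $i, j$. This bound follows directly from the choice of $\varphi$: by Proposition \ref{proposition:networkDistance} and the definition of $\eta$, we have $\dis(\varphi) \le \eta$, so by the definition of distortion,
\[
\bigl| d_V(x_i, x_j) - d_W(\varphi(x_i), \varphi(x_j)) \bigr| \le \eta,
\]
hence $d_W(\varphi(x_i), \varphi(x_j)) \le d_V(x_i, x_j) + \eta \le \delta + \eta$, as required. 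Therefore $(\varphi(x_0), \ldots, \varphi(x_n)) \in X_W^{\delta+\eta}$, and $\varphi_\delta$ is a well-defined morphism of directed simplicial complexes.

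The argument for $\psi_\delta \colon X_W^\delta \to X_V^{\delta+\eta}$ is identical with the roles of $V$ and $W$ swapped and using $\dis(\psi) \le \eta$ instead. There is no real obstacle here: the statement is essentially unpacking the definitions of distortion, the directed Rips filtration, and the correspondence distortion distance; the codistortion bounds are not needed at this stage (they will be needed later to produce the interleaving homotopies). The only thing to keep in mind is that the directed Rips complex allows repetitions of vertices, so $\varphi$ and $\psi$ need not be injective; this causes no issue because the filtration condition is stated as a bound on $d_V(x_i, x_j)$ for all $i \le j$, which includes the diagonal case $i = j$ and is preserved by arbitrary set maps once the distortion bound is in place.
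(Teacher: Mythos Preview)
Your proof is correct and follows essentially the same approach as the paper: unpack the directed Rips condition, apply the distortion bound $\dis(\varphi)\le\eta$ to obtain $d_W(\varphi(x_i),\varphi(x_j))\le\delta+\eta$, and conclude that the image tuple is a simplex of $X_W^{\delta+\eta}$, with the case of $\psi$ handled symmetrically. The additional remarks about codistortion and vertex repetitions are accurate but not needed for this lemma.
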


\begin{proof}
	Let us prove the statement for $\varphi_\delta$ (the proof is analogous for $\psi_\delta$). Let $(x_0,x_1,\dots,x_n)$ be an $n$-simplex in $X_V^\delta$. Then $d_V(x_i,x_j)\le\delta$ for all $1\le i\le j\le n$. Since $\dis(\varphi)\le\eta$, we have that, for all $v_1,v_2\in V$,
	\[\big|d_V(v_1,v_2)-d_W\big(\varphi(v_1),\varphi(v_2)\big)\big|\le\eta.\]
	Choosing $v_1=x_i$ and $v_2=x_j$, we have 
	\[
	d_W\big(\varphi(x_i),\varphi(x_j)\big)\le \eta + d_V(x_i,x_j)\le \delta + \eta 
	\text{ for all } 1\le i \le j \le n.
	\]
	Consequently, $\big(\varphi(x_0),\varphi(x_1),\dots,\varphi(x_n)\big)\in X_W^{\delta+\eta}$ and the result follows.
\end{proof}

\begin{lemma}\label{lemma:interlieved2}
	For $\delta\le\delta'\in\mathbb{R}$ consider the inclusion maps $i_\delta^{\delta'}\colon X_V^\delta\hookrightarrow X_V^{\delta'}$ and $j_\delta^{\delta'}\colon X_W^\delta\hookrightarrow X_W^{\delta'}$. The following are commutative diagrams of morphisms of directed simplicial complexes:
	\begin{center}\begin{tikzpicture}
		\tikzset{node distance = 2.5cm, auto}
		\node(1) {$X_V^\delta$};
		\node[right of = 1,xshift=0.3cm] (2) {$X_V^{\delta'}$};
		\node[below of = 1] (3) {$X_W^{\delta + \eta}$};
		\node[below of = 2] (4) {$X_W^{\delta'+\eta}$,};
		\node[right of = 2, xshift = 1cm] (5) {$X_W^{\delta}$};
		\node[right of = 5,xshift = 0.3cm] (6) {$X_W^{\delta'}$};
		\node[below of = 5] (7) {$X_V^{\delta+\eta}$};
		\node[below of = 6] (8) {$X_V^{\delta'+\eta}$.};
		\draw[->] (1) to node{$i_\delta^{\delta'}$} (2);
		\draw[->] (3) to node{$j_{\delta+\eta}^{\delta'+\eta}$} (4);
		\draw[->] (1) to node {$\varphi_\delta$} (3);
		\draw[->] (2) to node {$\varphi_{\delta'}$} (4);
		\draw[->] (5) to node{$j_\delta^{\delta'}$} (6);
		\draw[->] (7) to node{$i_{\delta+\eta}^{\delta'+\eta}$} (8);
		\draw[->] (5) to node {$\psi_\delta$} (7);
		\draw[->] (6) to node {$\psi_{\delta'}$} (8);
	\end{tikzpicture}\end{center}
\end{lemma}

\begin{proof}
	We prove that the first diagram is commutative (the proof for the second diagram is analogous). Let $x\in V$. Since $i_\delta^{\delta'}$ is an inclusion, $(\varphi_{\delta'}\circ i_{\delta}^{\delta'})(x) = \varphi_{\delta'} (x) = \varphi(x)$. Similarly, since $j_{\delta+\eta}^{\delta'+\eta}$ is an inclusion, $(j_{\delta+\eta}^{\delta'+\eta}\circ\varphi_\delta)(x) = j_{\delta+\eta}^{\delta'+\eta}\big(\varphi(x)\big) = \varphi(x)$.
\end{proof}

\begin{lemma}\label{lemma:interlieved3}
	With the same notation as in Lemmas \ref{lemma:interlieved1} and \ref{lemma:interlieved2},  for every $\delta\in\mathbb{R}$, the following diagrams of morphisms of directed simplicial complexes induce commutative diagrams on homology.
	\begin{center}\begin{tikzpicture}
		\tikzset{node distance = 2.5cm, auto}
		\node(1) {$X_V^\delta$};
		\node[right of = 1, xshift=2cm] (3) {$X_V^{\delta+2\eta}$};
		\node[below of = 1,xshift=2.25cm] (4) {$X_W^{\delta + \eta}$,};
		\node[right of = 3, xshift = 1cm](5) {$X_W^\delta$};
		\node[right of = 5,xshift=2cm] (7) {$X_V^{\delta+2\eta}$};
		\node[below of = 5, xshift = 2.25cm] (8) {$X_W^{\delta + \eta}$.};
		\draw[->] (1) to node{$i_\delta^{\delta+2\eta}$} (3);
		\draw[->] (1) to node{$\varphi_\delta$} (4);
		\draw[->] (4) to node {$\psi_{\delta+\eta}$} (3);
		\draw[->] (5) to node{$j_\delta^{\delta+2\eta}$} (7);
		\draw[->] (5) to node{$\psi_\delta$} (8);
		\draw[->] (8) to node {$\varphi_{\delta+\eta}$} (7);
	\end{tikzpicture}\end{center}
\end{lemma}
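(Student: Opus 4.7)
The plan is to apply Lemma \ref{lemma:prismConstruction} to each of the two triangles. For the first one, I take $f = i_\delta^{\delta+2\eta}$ and $g = \psi_{\delta+\eta}\circ\varphi_\delta$; both are morphisms of directed simplicial complexes $X_V^\delta\to X_V^{\delta+2\eta}$, with $f$ the identity on vertices and $g$ the map $x\mapsto\psi(\varphi(x))$. By functoriality of $H_n$, proving commutativity on homology amounts to showing $H_n(f)=H_n(g)$, so by Lemma \ref{lemma:prismConstruction} it suffices to verify that for every simplex $(x_0,x_1,\ldots,x_n)\in X_V^\delta$ and every $i=0,1,\ldots,n$, the interpolated tuple
\[\tau_i=\big(x_0,\ldots,x_i,\psi(\varphi(x_i)),\psi(\varphi(x_{i+1})),\ldots,\psi(\varphi(x_n))\big)\]
belongs to $X_V^{\delta+2\eta}$, i.e.\ that all ordered pairs of entries of $\tau_i$ are at $d_V$-distance at most $\delta+2\eta$.

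Splitting the pairs according to which half of $\tau_i$ they come from gives three subcases. If both entries are original vertices $x_j,x_k$ with $j\le k\le i$, then $d_V(x_j,x_k)\le\delta$ by hypothesis. If both entries are of the form $\psi(\varphi(x_j)),\psi(\varphi(x_k))$ with $i\le j\le k$, applying $\dis(\psi)\le\eta$ and then $\dis(\varphi)\le\eta$ yields $d_V(\psi(\varphi(x_j)),\psi(\varphi(x_k)))\le d_W(\varphi(x_j),\varphi(x_k))+\eta\le d_V(x_j,x_k)+2\eta\le\delta+2\eta$. The crucial mixed case pairs an entry $x_j$ ($j\le i$) with an entry $\psi(\varphi(x_k))$ ($k\ge i$): here the definition of codistortion gives $|d_V(x_j,\psi(\varphi(x_k)))-d_W(\varphi(x_j),\varphi(x_k))|\le\codis(\varphi,\psi)\le\eta$, and combining with $d_W(\varphi(x_j),\varphi(x_k))\le d_V(x_j,x_k)+\dis(\varphi)\le\delta+\eta$ produces the required bound.

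The second diagram is handled by the symmetric argument: take $f=j_\delta^{\delta+2\eta}$ and $g=\varphi_{\delta+\eta}\circ\psi_\delta$, and check the analogous hypothesis with the roles of $V,W$ and of $\varphi,\psi$ interchanged, using $\codis(\psi,\varphi)\le\eta$ in place of $\codis(\varphi,\psi)$ for the mixed pairs (note that this is precisely where the asymmetry of $\codis$ matters, and why both codistortions appear in the reformulation of $d_{\CD}$ in Proposition \ref{proposition:networkDistance}). There is no genuine obstacle beyond careful bookkeeping; the only conceptual point is that the mixed entries in $\tau_i$ cannot be bounded by distortion alone, so the codistortion hypothesis supplied by Proposition \ref{proposition:networkDistance} is exactly what makes the prism argument go through.
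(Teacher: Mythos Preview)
Your proof is correct and follows essentially the same approach as the paper: both apply Lemma~\ref{lemma:prismConstruction} to the pair $f=i_\delta^{\delta+2\eta}$, $g=\psi_{\delta+\eta}\circ\varphi_\delta$ by verifying that the interpolated tuples $\tau_i$ lie in $X_V^{\delta+2\eta}$, using $\codis(\varphi,\psi)\le\eta$ for the mixed pairs. The only minor difference is that the paper handles your second subcase (both entries of the form $\psi(\varphi(x_j))$) by invoking that $\psi_{\delta+\eta}\circ\varphi_\delta$ is already known to be a morphism into $X_V^{\delta+2\eta}$ from Lemma~\ref{lemma:interlieved1}, whereas you rederive this bound directly from $\dis(\varphi)$ and $\dis(\psi)$.
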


\begin{proof}
	Again, we only prove the result for the first diagram, as the proof for the second diagram is analogous. We show that it is commutative up to homotopy by showing that the maps $i_\delta^{\delta+2\eta}$ and $\psi_{\delta+\eta}\circ\varphi_\delta$ satisfy the hypothesis of Lemma \ref{lemma:prismConstruction}. 

	Take a simplex $\sigma = (x_0,x_1,\dots,x_n)\in X_V^\delta$, thus $d_V(x_i,x_j)\le \delta$, for all $1\le i\le j\le n$. On the one hand, $i_\delta^{\delta+2\eta}$ is an inclusion, so $i_\delta^{\delta + 2\eta}(\sigma) = \sigma$. On the other hand, since $\psi_{\delta+\eta}\circ\varphi_\delta$ is a morphism of directed simplicial complexes (Definition \ref{definition:simplicialMorphism}), $\big(\psi(\varphi(x_0)),\psi(\varphi(x_1)),\dots,\psi(\varphi(x_n))\big)\in X_V^{\delta+2\eta}$. This implies that $d_V\big(\psi(\varphi(x_i)),\psi(\varphi(x_j))\big)\le \delta+2\eta$, for all $1\le i\le j\le n$.

	Now recall that $\codis(\varphi,\psi)\le\eta$, thus for all $v\in V$ and $w\in W$,
	\[\big|d_V\big(v,\psi(w)\big)-d_W\big(\varphi(v),w\big)\big|\le\eta.\]
	Then, for $1\le i \le j \le n$, by taking $v = x_i$ and $w = \varphi(x_j)$,
	\[d_V\big(x_i,\psi(\varphi(x_j))\big)\le \eta + d_W\big(\varphi(x_i),\varphi(x_j)\big)\le \delta + 2\eta.\]
	As a consequence of the inequalities above, we have shown that for every $0\le i \le n$, \[\big(x_0,x_1,\dots,x_i,\psi(\varphi(x_i)),\psi(\varphi(x_{i+1})),\dots,\psi(\varphi(x_n))\big)\in X_V^{\delta+2\eta}.\]

	Therefore, the maps $i_\delta^{\delta+2\eta}$ and $\psi_{\delta+\eta}\circ\varphi_\delta$ satisfy the hypothesis of Lemma \ref{lemma:prismConstruction}, thus they induce the same map on homology. The result follows.
\end{proof}

We now have everything we need to prove the stability results.

\begin{theorem}\label{theorem:stability}
	Let $R\in\{\mathbb{Q},\mathbb{R}\}$. Let $(V, d_V)$ and $(W, d_W)$ be two dissimilarity functions on finite sets $V$ and $W$. Then, for all $n\ge 0$,
	\[d_B\big(\Dgm_n(V,d_V),\Dgm_n(W,d_V)\big)\le 2 d_{\CD}\big((V,d_V),(W,d_W)\big)\]
	and
	\[d_B\big(\Dgm_{n}^{\Dir}(V,d_V),\Dgm_{n}^{\Dir}(W,d_V)\big)\le 2 d_{\CD}\big((V,d_V),(W,d_W)\big).\]
\end{theorem}

\begin{proof}
	Define $\eta = 2 d_{\CD}\big((V,d_V),(W,d_W)\big)$. By Theorem \ref{theorem:interlievedImpliesStability}, it suffices to show that the persistence modules $\mathcal{H}_n(V, d_V)$ and $\mathcal{H}_n(W, d_W)$ (respectively $\mathcal{H}_{n}^{\Dir}(V, d_V)$ and $\mathcal{H}_{n}^{\Dir}(W, d_W)$) are $\eta$-interleaved. Comparing Definition \ref{definition:bottleneckDistance} (for $\varepsilon=\eta$) and Lemmas  \ref{lemma:interlieved1}, \ref{lemma:interlieved2} and \ref{lemma:interlieved3}, the result follows by using the functoriality of homology and, in the directed case, Proposition \ref{proposition:degmorphism}.
\end{proof}

\begin{remark}\label{remark:stabilityAndRepetitions}
   Recall from Remark \ref{remark:TurnerApplicable} that the persistence module $\mathcal{H}_n(V,d_V)$ associated to the directed Rips filtration of $(V,d_V)$ is the persistence module studied in \cite[Section 5]{Tur19}. Thus, the remarks made in \cite[Section 5.2]{Tur19} hold for these persistence modules, meaning that a result analogous to Theorem \ref{theorem:stability} would not hold if we were using ordered-set complexes instead of directed simplicial complexes, as mentioned at the beginning of Section \ref{section:homologySimplicialComplexes}. This justifies our definition of directed simplicial complex (Definition \ref{definition:directedSimplicialComplex}).
\end{remark}

\section{Computational challenges}\label{section:algorithms}

Let $(V, d_V)$ be a dissimilarity function in a finite set $V$. In this section, we study the algorithmic aspects of computing both $\mathcal{H}_{n}(V, d_V)$ and $\mathcal{H}_{n}^{\Dir}(V, d_V)$. For simplicity, we assume that $R =\mathbb{R}$.

The computation of the persistence barcodes for the undirected persistent homology $\mathcal{H}_{n}(V, d_V)$ can be made using the Standard Algorithm \cite{EdeLetZom02,ZomCar05}, which also applies to ordered simplicial complexes. 
Write $\mathcal{R}^{\Dir}(V,d_V)_\delta = X_V^\delta$, $\delta\in\mathbb{R}$ for the directed Rips filtration (Definition \ref{definition:filtrationDisNet}). As $V$ is finite, the number of simplices in $X_V^\delta$ up to dimension $n+1$ is finite, say $N$. We can list them $\{\sigma_1,\sigma_2,\dots,\sigma_N\}$ in such a way that $i < j$ if $\sigma_i$ is a (proper) face of $\sigma_j$, or if $\sigma_j$ appears `later' in the filtration, thus having a \emph{compatible ordering}. Then, we can represent the differential using a sparse $N\times N$ matrix $M$ over $\mathbb{R}$, where the $(i,j)$-entry $M_{ij}$ is the coefficient of $\sigma_j$ in the differential of $\sigma_i$. Such matrix $M$ is an upper-triangular matrix. At this stage, the Standard Algorithm can be used to compute $\Dgm_k(V,d_V)$ by reducing $M$ at once using column operations.

We cannot directly adapt the Standard Algorithm to compute $\Dgm_k^{\Dir}(V,d_V)$: we can try to find reduction operations that generate directed cycles using operations with positive coefficients only, but this is not enough to guarantee that we find all the possible directed cycles. For instance, once one column has been eliminated, we may be preventing the participation of the simplex it represents in directed cycles not involving the columns we are using in the elimination. To avoid such a problem, we would need to keep track of every possible way in which column eliminations can be produced using only positive coefficients. This is very similar to the \emph{double description algorithm} for the computation of extreme rays of a polyhedral cone \cite{FukPro96,MotRaiThoThr53}. Indeed, 
the directed cycles $Z_n^{\Dir}(X,\mathbb{R})$ are the non-negative solutions to the linear equation $Mx = 0$, that is,
\begin{equation}\label{eq:positiveCyles}
    \big\{x = (x_1,x_2,\dots,x_N)\in \mathbb{R}^N \mid M x = 0, x\ge 0\big\},
\end{equation}
where a vector $x = (x_1,x_2,\dots,x_N) \in \mathbb{R}^N$ represents the cycle $\sum_{i=1}^N x_i \sigma_i$, $M$ is the matrix corresponding to the differential, and
$x \ge 0$ means that every entry in $x$ is non-negative, that is, $x_i \ge 0$ for all $i$. Equivalently, directed cycles are the extreme rays of the unbounded polyhedral cone consisting of the intersection of the solution space to the linear system $M x = 0$ with the first quadrant of $\mathbb{R}^N$.
The enumeration of the extreme rays (generators of the solution set) of a polyhedral cone such as Eq.~\eqref{eq:positiveCyles} is a well-known albeit hard problem in computational geometry. Several algorithms, including the double description algorithm, and modifications thereof, exist \cite{Ter09}. 

Note, however, that we do not require the computation of all the directed cycles. Indeed, it would be enough to calculate them up to homology, which suggests that efficient computations may be possible. 

In summary, although important challenges remain on the computation of the persistence diagrams associated to directed persistence modules, our objective was to provide the necessary groundwork for the study of directed persistent homology of asymmetric data sets, and we are hopeful that we are opening up an exciting new approach for future research into the topological properties of asymmetric data sets. 

\subsection*{Acknowledgements}
This work was supported by The Alan Turing Institute under the EPSRC grant EP/N510129/1. The first author was partially supported by Ministerio de Econom\'{i}a y Competitividad (Spain) Grants Nos.\ MTM2016-79661-P and MTM2016-78647-P and by Ministerio de Ciencia, Innovaci\'on y Universidades (Spain) Grant No.\ PGC2018-095448-B-I00.

\appendix
\section{Directed persistent homology via semiring homology}\label{appendix:algebraicBackground}

As mentioned in the Introduction, our ideas of directed homology are based on homology with semiring coefficients, which can be introduced for directed simplicial complexes using chain complexes of semimodules and their associated homologies, building on work by Patchkoria \cite{Pat77,Pat14}. In this appendix, we introduce this homology theory and explain how it relates to the modules of directed homology introduced in Definition \ref{definition:directed-homology}. 

\subsection{Semirings, semimodules and their completions}\label{appendixSection:semimodules} Let us begin by introducing the algebraic structures that we need, following \cite{Gol99}.

\begin{definition}\label{definition:semiring}
	A \emph{semiring} $\Lambda = (\Lambda, +, \cdot)$ is a set $\Lambda$ together with two operations such that
	\begin{itemize}
		\item $(\Lambda,+)$ is an abelian monoid whose identity element we denote $0_\Lambda$,
		\item $(\Lambda,\cdot)$ is a monoid whose identity element we denote $1_\Lambda$,
		\item $\cdot$ is distributive with respect to $+$ from either side,
		\item $0_\Lambda\cdot \lambda = \lambda \cdot 0_\Lambda = 0_\Lambda$, for all $\lambda\in\Lambda$.
	\end{itemize}

	A semiring $\Lambda$ is \emph{commutative} if $(\Lambda,\cdot)$ is a commutative monoid, and \emph{cancellative} if $(\Lambda,+)$ is a cancellative monoid, that is, $\lambda + \lambda' = \lambda + \lambda''$ implies $\lambda' = \lambda''$ for all $\lambda,\lambda',\lambda''\in\Lambda$ . A semiring $\Lambda$ is a \emph{semifield} if every $0_\Lambda \neq \lambda\in\Lambda$ has a multiplicative inverse. A semiring is \emph{zerosumfree} if no element other than $0_\Lambda$ has an additive inverse.
\end{definition}

\begin{example}
	Every ring is a semiring. The non-negative integers, rationals and reals with their usual operations, respectively denoted $\mathbb{N}$, $\mathbb{Q}^+$ and $\mathbb{R}^+$, are cancellative zerosumfree commutative semirings which are not rings.
\end{example}

\begin{definition}\label{definition:semimodule}
	Let $\Lambda$ be a semiring. A \emph{(left) $\Lambda$-semimodule} is an abelian monoid $(A,+)$ with identity element $0_A$ together with a map $\Lambda\times A\to A$ which we denote $(\lambda,a)\mapsto \lambda a$ and such that for all $\lambda,\lambda'\in\Lambda$ and $a,a'\in A$,
	\begin{itemize}
		\item $(\lambda\lambda')a = \lambda(\lambda'a)$,
		\item $\lambda(a+a') = \lambda a + \lambda a'$,
		\item $(\lambda + \lambda')a = \lambda a + \lambda' a$,
		\item $1_\Lambda a = a$,
		\item $\lambda 0_A = 0_A = 0_\Lambda\lambda$.
	\end{itemize}

	A non-empty subset $B$ of a left $\Lambda$-semimodule $A$ is a \emph{subsemimodule} of $A$ if $B$ is closed under addition and scalar multiplication, which implies that $B$ is a left $\Lambda$-semimodule with identity element $0_A\in B$.	If $A$ and $B$ are $\Lambda$-semimodules, a $\Lambda$-homomorphism is a map $f\colon A\to B$ such that for all $a,a'\in A$ and for all $\lambda\in\Lambda$,
	\begin{itemize}
		\item $f(a+a') = f(a) + f(a')$,
		\item $f(\lambda a) = \lambda f(a)$.
	\end{itemize}
	Clearly, $f(0_A) = 0_B$.

	A $\Lambda$-semimodule $A$ is \emph{cancellative} if $a+a'=a+a''$ implies $a'=a''$, and \emph{zerosumfree} if $a+a'=0$ implies $a = a' = 0$, for all $a,a',a''\in A$.
\end{definition}

The Cartesian product of $\Lambda$-semimodules is a $\Lambda$-semimodule. The definition of direct sum of modules can be generalised to the framework of $\Lambda$-semimodules, and a finite direct sum of $\Lambda$-semimodules is isomorphic to the Cartesian product of the same $\Lambda$-semimodules. Quotient $\Lambda$-semimodules can be defined using  congruence relations.

\begin{definition}\label{definition:congruence}
	Let $A$ be a left $\Lambda$-semimodule. An equivalence relation $\rho$ on $A$ is a \emph{$\Lambda$-congruence} if, for all $a, a' \in \Lambda$, and all $\lambda \in \Lambda$,   
	\begin{itemize}
		\item if $a \sim_\rho a'$ and $b \sim_\rho b'$, then $(a + b)\sim_\rho (a' + b')$, and
		\item if $a \sim_\rho a'$, then $\lambda a \sim_\rho \lambda a'$.
	\end{itemize}
	If $\rho$ is a $\Lambda$-congruence relation on $A$ and we write $a/\rho$ for the class of an element $a\in A$, then $A/\rho =\{a/\rho\mid a\in A\}$ inherits a $\Lambda$-semimodule structure by setting $(a/\rho)+(a'/\rho) = (a+a')/\rho$ and $\lambda(a/\rho) = (\lambda a)/\rho$, for all $a,a'\in A$ and $\lambda\in\Lambda$. The left $\Lambda$-semimodule $A/\rho$ is called the \emph{factor semimodule} of $A$ by $\rho$. Note that the quotient map $A\to A/\rho$ is a surjective $\Lambda$-homomorphism.

	If $B$ is a subsemimodule of a $\Lambda$-semimodule $A$, then it determines a $\Lambda$-congruence $\sim_B$ by setting $a \sim_B a'$ if there exist $b,b'\in B$ such that $a + b = a' + b'$. Classes in this quotient are denoted $a/B$, and the factor semimodule is denoted $A/B$. If $\Lambda$ is a ring, this is just the usual quotient module of $A$ by $B$.
\end{definition}

\begin{example}\label{examples:factorSemimodules} The following are examples of factor semimodules.
    \begin{itemize}
        \item If $\Lambda$ is a ring and $B\le A$ are $\Lambda$-modules, the factor semimodule $A/B$ is the usual module quotient of $A$ by $B$.
        
        \item In the $\mathbb{N}$-module $\mathbb{N}\times\mathbb{N}$, consider the relation $\rho$ such that $(u,v)\sim_\rho(x,y)$ if $u + y = v + x$. It is immediate to show that $\rho$ is a $\mathbb{N}$-congruence, and the factor semimodule $(\mathbb{N}\times\mathbb{N})/\rho$ is isomorphic to $\mathbb{Z}$. Indeed, consider the map $(\mathbb{N}\times\mathbb{N})/\rho\to\mathbb{Z}$ taking $(u,v)\mapsto u - v$. It is clearly a map of $\mathbb{N}$-semimodules, $(u,v)\sim_\rho(x,y)\Leftrightarrow u + y = v + x\Leftrightarrow u - v = x - y$ shows that it is well-defined and injective, and it is clearly surjective.
        
        \item Also in $\mathbb{N}\times\mathbb{N}$, consider the submodule $B = \{0\}\times\mathbb{N}$. Then $(u,v)\sim_B(x,y)$ if there exist $(0,n),(0,m)\in \{0\}\times\mathbb{N}$ such that $(u,v) + (0,n) = (x,y) + (0,m)$. From this, we deduce that $(u,v)\sim_B(x,y)$ if and only if $u = x$. Namely, $(\mathbb{N}\times\mathbb{N})/(\{0\}\times\mathbb{N})$ is isomorphic to $\mathbb{N}$.
    \end{itemize}
\end{example}

Clearly, if $\Lambda$ is a semiring then it is a $\Lambda$-semimodule over itself. The idea of free $\Lambda$-semimodules comes about naturally just as in the case of rings, and we will make extensive use of these objects.


\begin{definition}\label{definition:freesemimodule}
	Let $\Lambda$ be a semiring, $A$ a left $\Lambda$-semimodule and $V = \{v_1,v_2,\dots,v_n\}$ a finite subset of $A$. The set $V$ is a \emph{generating set} for $A$ if every element in $A$ is a linear combination of elements of $V$. The \emph{rank} of a $\Lambda$-semimodule $A$, denoted $\rank(A)$, is the least $n$ for which there is a set of generators of $A$ with cardinality $n$, or infinity, if not such $n$ exists.
	
	The set $V$ is \emph{linearly independent} if for any $\lambda_1,\lambda_2,\dots,\lambda_n\in\Lambda$ and $\mu_1,\mu_2,\dots,\mu_n\in\Lambda$ such that $\sum_{i=1}^n \lambda_i v_i = \sum_{i=1}^n \mu_i v_i$, then $\lambda_i=\mu_i$, for $i=1,2,\dots,n$, and it is called \emph{linearly dependent} otherwise. We call $V$ is a \emph{basis} of $A$ if it is a linearly independent generating set of $A$. 

	The $\Lambda$-semimodule $A$ is a \emph{free $\Lambda$-semimodule} if it admits a basis $V$. We denote a free $\Lambda$-semimodule with basis $\{v_1,v_2,\dots,v_n\}$ by $\Lambda(v_1,v_2,\dots,v_n)$.
\end{definition}

\begin{remark}
	It is easy to check that free $\Lambda$-semimodules over a cancellative semiring $\Lambda$ are themselves cancellative, as are subsemimodules of cancellative $\Lambda$-semimodules. The quotient of a cancellative $\Lambda$-semimodule over any of its subsemimodules is also cancellative, see \cite[Proposition 15.24]{Gol99}.
\end{remark}

Note that if $\Lambda$ is a ring, free $\Lambda$-semimodules are just free $\Lambda$-modules. Thus, as not every module over a ring is free, clearly not every $\Lambda$-semimodule is free. On the other hand, $\Lambda^n$ (the direct sum, or product, of $n$ copies of $\Lambda$) is clearly a free $\Lambda$-semimodule, for every $n\ge 1$. 
Another key property is that if $A$ is a free $\Lambda$-semimodule with basis $V$ and $B$ is another $\Lambda$-semimodule, each map $V\to B$ uniquely extends to a $\Lambda$-homomorphism $A\to B$, see \cite[Proposition 17.12]{Gol99}. 

\begin{remark}\label{remark:basis}
	If $\Lambda$ is a commutative semiring and $A$ is a $\Lambda$-semimodule admitting a finite basis, every basis has the same cardinality, which coincides with the rank of $A$, \cite[Theorem 3.4]{Tan14}. In fact, for every integer $n>0$, every finitely generated subsemimodule of $\mathbb{N}^n$ has a unique basis, whereas basis for semimodules of $(\mathbb{R}^+)^n$ are unique up to non-zero multiples, see e.g.~\cite[Theorem 2.1]{KimRou80}.
\end{remark}

We finish this section by extending the Grothendieck construction from abelian monoids to semirings and semimodules. 

\begin{definition}\label{definition:grothendieckCompletion}
	Let $M$ be an abelian monoid. Consider the congruence relation $\rho$ in $M\times M$ defined as
	\[(u,v)\sim_\rho (x,y)\Leftrightarrow \text{there exists $z\in M$ such that  $u + y + z = v + x + z$}.\]
	Let $[u,v]$ denote the equivalence class of $(u,v)$. Then $M\times M/\rho$ becomes a group with the componentwise addition. This group is called the \emph{Grothendieck group} or \emph{group completion} of $M$. There is a canonical homomorphism of monoids $k_M\colon M\to K(M)$ defined as $k_M(x) = [x,0]$. If $M$ is a cancellative monoid, then $k_M$ is injective.

	Given an element $[u,v]\in K(M)$ we can interpret $u$ as its \emph{positive part} and $v$ as its \emph{negative part}, and the relation $\sim$ becomes the obvious one. The identity element is then $0 = [x,x]$ for any element $x\in M$, and the inverse of $[x,y]$ is $[y,x]$ for any $x,y\in M$.

	If $f\colon M\to N$ is a morphism of monoids, there is a morphism of groups $K(f)\colon K(M)\to K(N)$ that takes $[u,v]$ to $[f(u),f(v)]$. In fact, $K$ is a functor from the category of abelian monoids to the category of abelian groups. Furthermore, $K(f)\circ k_M = k_N\circ f$.
\end{definition}

As shown in Example \ref{examples:factorSemimodules},  $K(\mathbb{N})\cong\mathbb{Z}$. Similarly, one can show that $K(\mathbb{R}^+)\cong\mathbb{R}$. Crucially for us, this construction can be extended to semirings and semimodules.

\begin{definition}\label{definition:semimodule-completion}
	Let $\Lambda$ be a semiring. The group completion $K(\Lambda)$ becomes a ring with the operation $[x_1,x_2]\cdot[y_1,y_2] = [x_1 y_1 + x_2 y_2, x_2 y_1 + x_1 y_2]$, and $k_\Lambda\colon\Lambda\to K(\Lambda)$ is in fact a morphism of semirings, which is injective if $\Lambda$ is cancellative. We call $K(\Lambda)$ the \emph{ring completion} of $\Lambda$. If $f\colon\Lambda\to\Lambda'$ is a morphism of semirings, the map $K(f)\colon K(\Lambda)\to K(\Lambda')$ as introduced in Definition \ref{definition:grothendieckCompletion} is a morphism of rings, and $K$ is a functor from the category of semirings to the category of rings.

	If $A$ is a $\Lambda$-semimodule, then the abelian group $K(A)$ together with the operation $K(\Lambda)\times K(A)\to K(A)$ given by $[\lambda_1,\lambda_2][a_1,a_2] = [\lambda_1 a_1 + \lambda_2 a_2, \lambda_1 a_2 + \lambda_2 a_1]$ is a $K(\Lambda)$-module, the $K(\Lambda)$-module completion of $A$. Again, if $f\colon A\to B$ is a $\Lambda$-morphism, the map $K(f)\colon K(A)\to K(B)$ becomes a morphism of $K(\Lambda)$-modules. Furthermore, if $A$ is a free $\Lambda$-semimodule with basis $\{v_i\mid i\in I\}$, it becomes immediate that $K(A)$ is a free $K(\Lambda)$-module with basis $\big\{[v_i,0]\mid i\in I\}$. 
\end{definition}


\subsection{Chain complexes of semimodules}\label{appendixSection:chainComplexes}

Now that we have introduced the necessary algebraic structures, we move on to chain complexes of semimodules over semirings, following \cite{Pat14}. This theory is a natural generalisation of the classical theory of chain complexes of modules and, in fact, they give rise to the same cycles, boundaries and homologies when the semimodules are modules over a ring. 

In order to introduce chain complexes in the context of semimodules an immediate problem arises: alternating sums cannot be defined as elements in a semimodule may not have inverses. The solution is to use two maps, a \emph{positive} and \emph{negative} part, for the differentials.

\begin{definition}
	Let $\Lambda$ be a semiring and consider a sequence of $\Lambda$-semimodules and homomorphisms indexed by $n \in \mathbb{Z}$
	\[X\colon\cdots \rightrightarrows X_{n+1} \xrightrightarrows[\partial_{n+1}^-]{\partial_{n+1}^+} X_n \xrightrightarrows[\partial_{n}^-]{\partial_{n}^+} X_{n-1} \rightrightarrows \cdots. \]
	We say that $X = \{X_n, \partial_n^+, \partial_n^-\}$ is a \emph{chain complex} of $\Lambda$-semimodules if
	\[\partial^+_n \partial^+_{n+1} + \partial_n^-\partial_{n+1}^- = \partial_n^+ \partial_{n+1}^- + \partial_n^-\partial_{n+1}^+.\]
\end{definition}

As in the classical case, chain complexes of $\Lambda$-semimodules give rise to a $\Lambda$-semimodule of homology.

\begin{definition}\label{definition:homology}
	Let $X = \{X_n, \partial_n^+, \partial_n^-\}$ be a chain complex of $\Lambda$-semimodules. The $\Lambda$-semimodule of \emph{cycles} of $X$ is
	\[Z_n(X,\Lambda) = \{x\in X_n\mid \partial_n^+(x) = \partial_n^-(x)\}.\]
	The \emph{$n$th homology} of $X$ is then the quotient $\Lambda$-semimodule
	\[H_n(X,\Lambda) = Z_n(X,\Lambda)/\rho_n(X,\Lambda)\]
	where $\rho_n(X,\Lambda)$ is the following $\Lambda$-congruence relation on $Z_n(X,\Lambda)$:
	\[x \sim_{\rho_n(X,\Lambda)} y\Leftrightarrow \exists u,v\in X_{n+1} \text{ s.t. }  x + \partial_{n+1}^+(u) + \partial_{n+1}^-(v) = y + \partial_{n+1}^+(v) + \partial_{n+1}^-(u).\]
	We will omit from now on the coefficient semiring $\Lambda$ from the notation when it is clear from the context.
\end{definition}

\begin{remark}
  The definition of cycle is a direct generalisation of the classical definition. For the boundary relation, note that we may need two different chains $u$ and $v$ in order to establish two classes as homologous. Intuitively, these two classes are the `positive' and `negative' part of $w$, where $x = y + \partial(w)$ in the classical setting.
\end{remark}


\begin{remark}\label{remark:homologysemiringsandrings}
	If $X = \{X_n, \partial_n^+, \partial_n^-\}$ is a chain complex of $\Lambda$-semimodules, then
	\[\dots\longrightarrow K(X_{n+1})\xrightarrow{K(\partial_{n+1}^+) - K(\partial_{n+1}^-)} K(X_n)\xrightarrow{K(\partial_{n}^+) - K(\partial_{n}^-)} K(X_{n-1})\longrightarrow \cdots\]
	is a chain complex of $K(\Lambda)$-modules. If furthermore the $\Lambda$-semimodules $X_n$ are cancellative for all $n$, the converse is also true.

	If $\Lambda$ is a ring, then $K(\Lambda) = \Lambda$ and the functor $K$ acts as the identity on $\Lambda$-modules. Therefore, $X = \{X_n, \partial_n^+, \partial_n^-\}$ is a chain complex of $\Lambda$-semimodules if and only if
	\[\cdots\longrightarrow X_{n+1}\xrightarrow{\partial_{n+1}^+ - \partial_{n+1}^-} X_n \xrightarrow{\partial_{n}^+ - \partial_{n}^-} X_{n-1}\longrightarrow \cdots\]
	is a chain complex of $\Lambda$-modules. In this case, it is clear that the homology semimodules introduced in Definition \ref{definition:homology} are precisely the usual homology modules of $X$.
\end{remark}

We remark that a similar idea of decomposition of differentials in a positive and negative components has been used by Steiner to relate $\omega$-categories with the so-called \emph{augmented directed complexes}, \cite{Ste04}. These structures share, in fact, a deep similarity with Patchkoria's chain complexes with semiring coefficients. 

We now turn our attention to maps between complexes. 

\begin{definition}
	Let $X = \{X_n, \partial_n^+, \partial_n^-\}$ and $X' = \{X'_n, \partial_n^+, \partial_n^-\}$ be two chain complexes of $\Lambda$-semimodules. A sequence $f = \{f_n\}$ of $\Lambda$-homomorphisms $f_n\colon X_n\to X_n'$ is said to be a \emph{morphism} from $X$ to $X'$ if 
	\[\partial_n^+ f_n + f_{n-1}\partial_n^- = \partial_n^- f_n + f_{n-1}\partial_n^+.\]
	
	It is clear that for such a map  $f_n\big(Z_n(X)\big)$ is a $\Lambda$-subsemimodule of $Z_n(X')$. Furthermore, if $X_n$ and $X_n'$ are cancellative $\Lambda$-semimodules, $f$ is also compatible with the congruence relations $\rho_n(X)$ and $\rho_n(X')$, so it induces a map
	\[H_n(f)\colon H_n(X)\to H_n(X').\]
	It is then easy to check that homology $H_*$ is a functor from the category of chain complexes of cancellative $\Lambda$-semimodules and morphisms to the category of graded $\Lambda$-semimodules.
\end{definition}

\begin{remark}\label{remark:homologyandKfunctor}
	Let $\Lambda$ be a semiring and let $\{X_n, \partial_n^+, \partial_n^-\}$ be a chain complex of $\Lambda$-semimodules. The family of canonical maps $k_{X_n}\colon X_n\to K(X_n)$ gives rise to a morphism from $\{X_n, \partial_n^+, \partial_n^-\}$ to $\big\{K(X_n), K(\partial_n^+), K(\partial_n^-)\big\}$ and, therefore, to a morphism of $\Lambda$-semimodules
	\[H(k_X)\colon H_n(X)\to H_n\big(K(X)\big),\]
	which takes the class of $x$ to the class of $[x,0]$. Furthermore, if the $\Lambda$-semimodules $X_n$ are cancellative, $H_n(k_X)$ is injective, which in particular implies that $H_n(X)$ is a cancellative $\Lambda$-semimodule. Also note that, by Remark \ref{remark:homologysemiringsandrings}, the homology of $\big\{K(X_n), K(\partial_n^+), K(\partial_n^-)\big\}$ and the usual homology of $\big\{K(X_n), K(\partial_n^+)-K(\partial_n^-)\big\}$ are isomorphic as $K(\Lambda)$-modules.
\end{remark}

Finally, we discuss chain homotopies.

\begin{definition}\label{definition:chainHomotopy}
	Let $f = \{f_n\}$ and $g = \{g_n\}$ be morphisms from $X = \{X_n, \partial_n^+, \partial_n^-\}$ to $X' = \{X'_n, \partial_n^+, \partial_n^-\}$.  We say that $f$ is \emph{homotopic} to $g$ if there exist $\Lambda$-homomorphisms $s_n^+, s_n^-\colon X_n\to X_{n+1}'$ such that
	\begin{align*}
		& \partial_{n+1}^+ s_n^- + \partial_{n+1}^- s_n^+ + s_{n-1}^- \partial_n^++ s_{n-1}^+\partial_n^- + g_n\\
		= \ & \partial_{n+1}^+ s_n^+ + \partial_{n+1}^- s_n^- + s_{n-1}^+ \partial_n^++ s_{n-1}^-\partial_n^- + f_n, 
	\end{align*}
	for all $n$. The family $\{s_n^+,s_n^-\}$ is called a \emph{chain homotopy} from $f$ to $g$, and we write $(s^+,s^-)\colon f\simeq g$.
\end{definition}

We then have the following result.

\begin{proposition}{\cite[Proposition 3.3]{Pat14}}	Let $f,g\colon X\to X'$ be morphisms between chain complexes of cancellative $\Lambda$-semimodules. If $f$ is homotopic to $g$, then $H_n(f) = H_n(g)$.
\end{proposition}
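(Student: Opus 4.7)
The plan is to verify directly, for a cycle $z \in Z_n(X)$, that $f_n(z)$ and $g_n(z)$ are equivalent under the congruence $\rho_n(X')$, by exhibiting the natural candidates $u := s_n^+(z)$ and $v := s_n^-(z)$ coming from the chain homotopy, and then to conclude that the induced maps on homology agree.

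First I would evaluate the chain homotopy identity from Definition \ref{definition:chainHomotopy} at $z$, obtaining the equality in $X'_n$
\[
\partial_{n+1}^+ s_n^-(z) + \partial_{n+1}^- s_n^+(z) + s_{n-1}^- \partial_n^+(z) + s_{n-1}^+\partial_n^-(z) + g_n(z) = \partial_{n+1}^+ s_n^+(z) + \partial_{n+1}^- s_n^-(z) + s_{n-1}^+ \partial_n^+(z) + s_{n-1}^-\partial_n^-(z) + f_n(z).
\]
Since $z$ is a cycle, $\partial_n^+(z) = \partial_n^-(z)$, so the two sums $s_{n-1}^- \partial_n^+(z) + s_{n-1}^+\partial_n^-(z)$ and $s_{n-1}^+ \partial_n^+(z) + s_{n-1}^-\partial_n^-(z)$ are both equal to $s_{n-1}^+\partial_n^+(z) + s_{n-1}^-\partial_n^+(z)$. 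Using the cancellativity of $X'_n$, I can strike these identical summands from both sides, obtaining
\[
f_n(z) + \partial_{n+1}^+ s_n^+(z) + \partial_{n+1}^- s_n^-(z) = g_n(z) + \partial_{n+1}^+ s_n^-(z) + \partial_{n+1}^- s_n^+(z),
\]
which, with the choice $u = s_n^+(z)$ and $v = s_n^-(z)$ (both elements of $X'_{n+1}$), is precisely the defining condition for $f_n(z) \sim_{\rho_n(X')} g_n(z)$. Hence $H_n(f)$ and $H_n(g)$ agree on the class of every cycle, so $H_n(f) = H_n(g)$.

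The only delicate point in this plan is the cancellation step: the chain homotopy identity bundles together five terms on each side, and one has to recognise that the pair of $s_{n-1}^\pm \partial_n^\mp$ summands is forced to coincide on cycles and can then be removed. This is where cancellativity of the target semimodules is essential; without it, even when the two expressions agree, we could not pare the identity down to the shape demanded by the congruence $\rho_n(X')$. Everything else is a matter of matching the four remaining $\partial_{n+1}^\pm s_n^\pm(z)$ terms against the template in Definition \ref{definition:homology}.
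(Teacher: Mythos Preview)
Your proof is correct and is precisely the natural direct argument: evaluate the chain-homotopy identity at a cycle, use $\partial_n^+(z)=\partial_n^-(z)$ to make the $s_{n-1}^\pm\partial_n^\mp$ terms coincide, cancel them using cancellativity of $X'_n$, and read off the $\rho_n(X')$-relation with $u=s_n^+(z)$, $v=s_n^-(z)$. Note, however, that the paper does not supply its own proof of this proposition; it simply cites \cite[Proposition~3.3]{Pat14}, so there is nothing in the paper to compare against beyond confirming that your argument matches the expected one from the cited source.
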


Homotopy equivalences are defined in the usual way and they induce isomorphisms on homology. We finish with a remark that the homotopy of maps behaves well with respect to semimodule completion.

\begin{remark}
	If a morphism $f\colon X\to X'$ is homotopic to $g\colon X\to X'$, then $K(f)\colon K(X)\to K(X')$ is homotopic to $K(g)\colon K(X)\to K(X')$. Furthermore, if both $X_n$ and $X_n'$ are cancellative $\Lambda$-semimodules, for all $n$, and $X_n$ is a free $\Lambda$-semimodule for all $n$, the converse is also true.
\end{remark}

\subsection{Chain complexes of semimodules of directed simplicial complexes}

In this section, we introduce chain complexes of semimodules and homology semimodules of directed simplicial complexes, Definition \ref{definition:directedSimplicialComplex}. The results introduced here directly generalise those in Section \ref{section:homologySimplicialComplexes}.

\begin{definition}
	The \emph{$n$-dimensional chains} of a directed simplicial complex $X$ are defined as the elements of the free $\Lambda$-semimodule generated by (i.e.~with basis) the $n$-simplices,
	\[C_n(X,\Lambda) = \Lambda\big(\left\{[x_0,x_1,\dots,x_n]\mid
	(x_0,x_1,\dots,x_n)\in X\right\}\big).\]
	We call the elements $[x_0,x_1,\dots,x_n]\in C_n(X,\Lambda)$, $(x_0,x_1,\dots,x_n)\in X$, \emph{elementary $n$-chains}.
\end{definition}

\begin{remark}\label{remark:cancellative}
    Note that if $\Lambda$ is a cancellative semiring, the $\Lambda$-semimodule $C_n(X,\Lambda)$ is cancellative for every $n$, as it is a free semimodule over a cancellative semiring.
\end{remark}

We now define the positive and negative differentials on $C_n(X,\Lambda)$.

\begin{definition}
	Let $X$ be a directed simplicial complex. For each $n>0$, we define morphisms of $\Lambda$-semimodules $\partial_n^+,\partial_n^-\colon C_n(X,\Lambda)\to C_{n-1}(X,\Lambda)$ by
	\begin{align*}
	\partial_n^+([x_0,x_1,\dots,x_n]) &= \sum_{i=0}^{\lfloor\frac{n}{2}\rfloor}[x_0,x_1,\dots,\widehat{x_{2i}},\dots,x_n], \text{ and}\\
	\partial_n^-([x_0,x_1,\dots,x_n]) &= \sum_{i=0}^{\lfloor\frac{n-1}{2}\rfloor}[x_0, x_1,\dots,\widehat{x_{2i+1}},\dots,x_n].
	\end{align*}
    For $n=0$, let $\partial_0^+,\partial_0^-\colon C_0(X,\Lambda)\to\{0\}$ be the trivial maps, by definition.
\end{definition}

\begin{proposition}\label{proposition:semiringchaincomplex}
	Let $X$ be a directed simplicial complex and $\Lambda$ be a cancellative semiring. Then $\{C_n(X,\Lambda),\partial_n^+,\partial_n^-\}$ is a chain complex of modules of $\Lambda$-semimodules.			
\end{proposition}

\begin{proof}
    By Remark \ref{remark:cancellative}, the $\Lambda$-semimodule $C_n(X,\Lambda)$ is cancellative for every $n$. Thus, by Remark \ref{remark:homologysemiringsandrings}, it is enough to prove that $\big\{K\big(C_n(X,\Lambda)\big),K(\partial_n^+)-K(\partial_n^-)\big\}$ is a chain complex of $K(\Lambda)$-modules.
    Note that $K\big(C_n(X,\Lambda)\big)$ is a free $K(\Lambda)$-module whose basis is given by elements $\big[[x_0,x_1,\dots,x_n],0\big]$ such that $[x_0,x_1,\dots,x_n]$ is an elementary $n$-chain. Thus, it suffices to show that the composition $\big(K(\partial_n^+)-K(\partial_n^-)\big) \circ \big(K(\partial_{n-1}^+)-K(\partial_{n-1}^-)\big)$ 
    is trivial (zero) on these elements. Since
    \[\big(K(\partial_n^+)-K(\partial_n^-)\big)\big[[x_0,x_1,\dots,\widehat{x_i},\ldots,x_n],0\big] =\sum_{i=0}^n (-1)^i \big[[x_0,x_1,\dots,x_n],0\big],\]
    the proof is now a straightforward computation analogous to the standard one for chain complexes in simplicial homology.
\end{proof}

Proposition \ref{proposition:semiringchaincomplex} allows us to define the homology of a directed simplicial complex with coefficients on a semiring.

\begin{definition}\label{definition:semiringHomologyOfDirectedComplex}
	Let $X$ be a directed simplicial complex and $n \ge 0$ and $\Lambda$ a cancellative semiring. The $n$-dimensional \emph{homology} of $X$, written $H_n(X,\Lambda)$, is the $n$th homology $\Lambda$-semimodule of the chain complex $\{C_n(X,\Lambda),\partial_n^+,\partial_n^-\}$.
\end{definition}

\begin{remark}
	If $\Lambda$ is a ring, by defining $\partial = \partial^+-\partial^+$, $\{C_n(X,\Lambda),\partial\}$ is a chain complex in the usual sense. In fact, it is precisely the chain complex introduced in Definition \ref{definition:chainComplexDirectedComplex}. Thus, as a consequence of Remark \ref{remark:homologysemiringsandrings}, the homology theory introduced in Definition \ref{definition:semiringHomologyOfDirectedComplex} coincides with the one introduced in Definition \ref{definition:directedHomologyComplexes}. Therefore, homology with semiring coefficients of directed simplicial complexes is a non-trivial generalisation of homology with ring coefficients.
\end{remark}

The following result will become useful later on, so we recorded it here. 

\begin{proposition}\label{proposition:homologyandK}
	Let $\Lambda$ be a cancellative semiring and $X$ a directed simplicial complex. The chain complexes of $K(\Lambda)$-semimodules $\big\{K\big(C_n(X,\Lambda)\big),K(\partial_n^+),K(\partial_n^-)\big\}$ and $\big\{C_n\big(X,K(\Lambda)\big),\partial_n^+,\partial_n^-\big\}$ are isomorphic.
\end{proposition}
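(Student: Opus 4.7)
The plan is to exhibit an explicit isomorphism of chain complexes degree-by-degree, defined on basis elements, and then verify compatibility with the two differentials. This is essentially a naturality statement: the free-chain construction commutes with the group-completion functor $K$ on coefficients.

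First, I would identify the two modules in each degree $n$. By definition $C_n(X, K(\Lambda))$ is the free $K(\Lambda)$-module with basis $\{[x_0,\dots,x_n] \mid (x_0,\dots,x_n) \in X_n\}$. On the other hand, $C_n(X,\Lambda)$ is the free $\Lambda$-semimodule on the same set of elementary $n$-chains, so by the last observation of Section \ref{section:semimodules} its completion $K(C_n(X,\Lambda))$ is a free $K(\Lambda)$-module with basis $\bigl\{\bigl[[x_0,\dots,x_n],0\bigr] \bigm| (x_0,\dots,x_n) \in X_n\bigr\}$. Since both are free $K(\Lambda)$-modules on (canonically bijective) generating sets, the universal property of free modules yields a unique $K(\Lambda)$-isomorphism
\[
\Phi_n \colon K\bigl(C_n(X,\Lambda)\bigr) \longrightarrow C_n\bigl(X,K(\Lambda)\bigr), \qquad \bigl[[x_0,\dots,x_n],0\bigr] \longmapsto [x_0,\dots,x_n].
\]

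Next, I would check that $\Phi = \{\Phi_n\}$ is a morphism of chain complexes by verifying $\Phi_{n-1}\circ K(\partial_n^{\pm}) = \partial_n^{\pm}\circ \Phi_n$ separately for the positive and negative differentials. Since both sides are $K(\Lambda)$-homomorphisms between free modules, it suffices to check equality on basis elements $\bigl[[x_0,\dots,x_n],0\bigr]$. Tracing through, $K(\partial_n^+)\bigl[[x_0,\dots,x_n],0\bigr] = \bigl[\partial_n^+[x_0,\dots,x_n],0\bigr] = \sum_{i=0}^{\lfloor n/2\rfloor}\bigl[[x_0,\dots,\widehat{x_{2i}},\dots,x_n],0\bigr]$, which $\Phi_{n-1}$ sends to $\sum_{i=0}^{\lfloor n/2\rfloor}[x_0,\dots,\widehat{x_{2i}},\dots,x_n]$; on the other side, $\partial_n^+\Phi_n\bigl[[x_0,\dots,x_n],0\bigr] = \partial_n^+[x_0,\dots,x_n]$ gives exactly the same expression. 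The same verification works verbatim for $\partial_n^-$ with $2i+1$ in place of $2i$.

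There is no real obstacle: the result is essentially a formal consequence of the fact that the differentials $\partial_n^\pm$ are defined by combinatorial formulas on a basis, so their definition is insensitive to whether the coefficient object is $\Lambda$ (and then completed) or $K(\Lambda)$ directly. The only subtlety worth highlighting in writing up is invoking freeness on both sides to justify that checking on basis elements suffices; this is exactly where the cancellativity of $\Lambda$ (ensuring $C_n(X,\Lambda)$ embeds into its completion and the basis persists) is implicitly used.
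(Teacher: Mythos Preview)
Your proposal is correct and takes essentially the same approach as the paper: both identify the two free $K(\Lambda)$-modules via the obvious bijection of bases and then note (or, in your case, explicitly verify) that the resulting maps commute with $\partial_n^\pm$. Your $\Phi_n$ is exactly the paper's $\beta_n$, and your explicit check of the chain-map condition is what the paper summarises as ``immediate computations''.
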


\begin{proof}
	Take $n\ge 0$ an integer. By definition, $C_n\big(X,K(\Lambda)\big)$ is the free $K(\Lambda)$-module over the  elementary $n$-chains $[x_0,x_1,\dots,x_n]$. Similarly, $K\big(C_n(X,\Lambda)\big)$ is a free $K(\Lambda)$-module with a basis given by the elements $\big[[x_0,x_1,\dots,x_n],0\big]$ where $[x_0,x_1,\dots,x_n]$ is an elementary $n$-chain. Consider the $K(\Lambda)$-morphisms
	\begin{align*}
		\begin{aligned}
		\alpha_n\colon C_n\big(X,K(\Lambda)\big) & \longrightarrow K\big(C_n(X,\Lambda)\big) \\
		[x_0,x_1,\dots,x_n] & \longmapsto \big[[x_0,x_1,\dots,x_n],0\big],
		\end{aligned} &&
		\begin{aligned}
		\beta_n\colon K\big(C_n(X,\Lambda)\big) & \longrightarrow C_n\big(X,K(\Lambda)\big) \\
		\big[[x_0,x_1,\dots,x_n],0\big] & \longmapsto [x_0,x_1,\dots,x_n].
		\end{aligned}
	\end{align*}
	Immediate computations show that the families of $K(\Lambda)$-morphisms $\{\alpha_n\}$ and $\{\beta_n\}$ are morphisms of $K(\Lambda)$-semimodules which are inverses of each other, and the claim follows.
\end{proof}

\subsection{Functoriality of semiring homology}

In this section, we prove that homology is a functor from the category of directed simplicial complexes to the category of graded $\Lambda$-semimodules. We also show that two morphisms allowing for the construction of the prism operator must induce the same map on homology, a result we will need to prove our persistent homology stability result. Recall the definition of morphism of directed simplicial complexes, Definition \ref{definition:simplicialMorphism}.

\begin{definition}
	Let $f\colon X\to Y$ be a morphism of directed simplicial complexes. Then $f$ induces morphisms of $\Lambda$-semimodules $C(f) = \{C_n(f)\}$,
	\begin{align*} C_n(f)\colon C_n(X,\Lambda)&\longrightarrow C_n(Y,\Lambda) \\
		[x_0,x_1,\dots,x_n]&\longmapsto [f(x_0), f(x_1),\dots,f(x_n)].
	\end{align*}
	We will often abbreviate $C_n(f)=f_n$.
\end{definition}

\begin{proposition}\label{proposition:semimodulesdegmorphism}
	If $f\colon X\to Y$ is a morphism of directed simplicial complexes, the family of maps $\{C_n(f)\}$ is a $\Lambda$-homomorphism of chain complexes. Therefore, it induces a map $H_n(f)\colon H_n(X,\Lambda)\to H_n(Y,\Lambda)$.
\end{proposition}

\begin{proof}
    Simple computations show that $f_{n-1}\partial_n^+ = \partial_n^+ f_n$ and that $f_{n-1}\partial_n^- = \partial_n^- f_n$.
\end{proof}

\begin{remark}\label{remark:KCnf}
	As a consequence of Proposition \ref{proposition:homologyandK}, the map $C_n(f)\colon C_n\big(X,K(\Lambda)\big)\to C_n\big(Y,K(\Lambda)\big)$ is precisely $K\big(C_n(f)\big)\colon K\big(C_n(X,\Lambda)\big)\to K\big(C_n(Y,\Lambda)\big)$.
\end{remark}

\begin{corollary}
	Homology is a functor from the category of directed simplicial complexes to the category of graded $\Lambda$-semimodules. In particular, isomorphic directed simplicial complexes have isomorphic homologies. Note further that when $\Lambda$ is a ring, the homology functor is precisely the functor introduced in Section \ref{section:homologyFunctoriality}.	
\end{corollary}

We finish this section by showing a sufficient condition for two morphisms to induce the same map on homology. We will need this result to prove that our definition of persistent homology is stable.

\begin{lemma}
	Let $\Lambda$ be a cancellative semiring. Let $X$ and $Y$ be two directed simplicial complexes and let $f,g\colon X\to Y$ be morphisms of directed simplicial complexes such that if $(x_0,x_1,\dots,x_n)\in X$, then $\big(f(x_0),f(x_1),\dots,f(x_i),g(x_i),\dots,g(x_n)\big)\in Y$ for every $i=0,1,\dots,n$. Then, $H_n(f) = H_n(g)$ for every $n \ge 0$.
\end{lemma}

\begin{proof}
	For $x = [x_0,x_1,\dots,x_n]\in C_n(X,\Lambda)$ an elementary $n$-chain, define
	\begin{align*}
	s_n^+[x_0,x_1,\dots,x_n] &= \sum_{i=0}^{\lfloor\frac{n}{2}\rfloor} [f(x_0),\dots,f(x_{2i}),g(x_{2i}),\dots,g(x_n)], \text{ and}\\
	s_n^-[x_0,x_1,\dots,x_n] &= \sum_{i=0}^{\lfloor\frac{n-1}{2}\rfloor} [f(x_0),\dots,f(x_{2i+1}),g(x_{2i+1}).\dots,g(x_n)].\end{align*}
	Now recall that since $\Lambda$ is cancellative, the canonical map $k_n\colon C_n(X,\Lambda)\to K\big(C_n(X,\Lambda)\big)$ is injective for all $n$. We will show that $(s^+,s^-)\colon C(f)\simeq C(g)$ by proving that both sides of the equality in Definition \ref{definition:chainHomotopy} have the same images through $k_n$. We will also make use of the isomorphism $K\big(C_n(X,\Lambda)\big) \cong C_n\big(X,K(\Lambda)\big)$ established in Proposition \ref{proposition:homologyandK}, and that for any $\Lambda$-morphism $h\colon C_n(X,\Lambda)\to C_n(X,\Lambda)$, $k_n\circ h = K(h)\circ k_n$.

	By abuse of notation, we write $k_n(x) = x = [x_0,x_1,\dots,x_n] \in C_n\big(X,K(\Lambda)\big)$. Denote $\partial_n = K(\partial_n^+) - K(\partial_n^-)$ and $s_n = K(s_n^+) - K(s_n^-)$. Then,
	\[\partial_n(x) = \sum_{i=0}^n (-1)^i [x_0,x_1,\dots,\widehat{x_i},\dots,x_n], \hspace{0.2cm} s_n(x) = \sum_{j=0}^n (-1)^j \big[f(x_0),f(x_1),\dots,f(x_j),g(x_j),\dots,g(x_n)\big].\]
	Note that, by appropriately grouping the terms in the equality in Definition \ref{definition:chainHomotopy}, it suffices to show that $K(g_n)-K(f_n)$ and $\partial_{n+1} s_n + s_{n-1}\partial_n$ have the same image on $x$. These computations are analogous to the ones in Lemma \ref{lemma:prismConstruction}.
\end{proof}

\subsection{Comparison to directed homology and relation to persistence}

In Section \ref{section:homologySimplicialComplexes}, directed homology modules were introduced by taking submodules of homology generated by cycles with only non-negative coefficients on the elementary chains. This was done for the rings $\mathbb{Z}$, $\mathbb{Q}$, and $\mathbb{R}$. These three rings have in common that they are the Grothendieck's completion of a cancellative, zerosumfree semiring. Namely, $\mathbb{Z} = K(\mathbb{N})$, $\mathbb{Q}=K(\mathbb{Q}^+)$, $\mathbb{R} = K(\mathbb{R}^+)$. It turns out that homology with coefficients in this class of semirings has the ability to detect directed cycles, and what we have introduced in the main body of this article is a particular case of this fact.

\begin{proposition}\label{proposition:directedCyclesCorrespondence}
    Let $\Lambda\in \{\mathbb{N},\mathbb{Q}^+,\mathbb{R}^+\}$ and let $X$ be a directed simplicial complex. Consider $Z_n^{\Dir}\big(X,K(\Lambda)\big)$ the set of directed cycles introduced in Definition \ref{definition:directed-homology}. Then, $Z_n^{\Dir}\big(X,K(\Lambda)\big) \cong Z_n(X,\Lambda)$ as $\Lambda$-semimodules.
\end{proposition}

\begin{proof}
    First note that for the considered semirings, $\Lambda$ can be seen as the non-negative elements of $K(\Lambda)$. Using this identification, it is straightforward to show that $Z_n^{\Dir}\big(X,K(\Lambda)\big)$ is a $\Lambda$-semimodule. In particular, linear combinations of directed cycles with non-negative coefficients are directed cycles.
    
    By abuse of notation, we will denote elementary $n$-chains in the same way in $C_n(X,\Lambda)$ and in $C_n\big(X,K(\Lambda)\big)$. Assume that $\sum_{i=1}^n \lambda_i c_i\in Z_n(X,\Lambda)$. Then clearly $\sum_{i=1}^n \lambda_i c_i\in Z_n\big(X,K(\Lambda)\big)$ (see Proposition \ref{proposition:homologyandK}). Furthermore, since the involved coefficients are non-negative, $\sum_{i=1}^n \lambda_i c_i\in Z_n^{\Dir}\big(X,K(\Lambda)\big)$. Consequently, we can introduce a map
    \begin{align*}
        \varphi\colon Z_n(X,\Lambda) &\longrightarrow Z_n^{\Dir}\big(X,K(\Lambda)\big)\\
        \sum_{i=1}^n \lambda_i c_i &\longmapsto \sum_{i=1}^n \lambda_i c_i.
    \end{align*}
    It is immediate to check that $\varphi$ is a morphism of $\Lambda$-semimodules. It is also injective. We only need to prove that it is surjective. For that, it would be enough to prove that if $\sum_{i=1}^n \lambda_i c_i\in Z_n^{\Dir}\big(X,K(\Lambda)\big)$, then $\sum_{i=1}^n \lambda_i c_i\in Z_n(X,\Lambda)$. To that purpose, note that $x =\sum_{i=1}^n \lambda_i c_i$ can be regarded as an element of $C_n(X,\Lambda)$ as the involved coefficients are non-negative, and thus, in $\Lambda$. We only need to prove that $\partial^+(x) = \partial^-(x)$. Since $\Lambda$ is cancellative, it is enough to prove that $K\big(\partial^+(x)\big) = K\big(\partial^-(x)\big)$, or equivalently, that $K\big(\partial^+(x)\big) - K\big(\partial^-(x)\big) = 0$. But this follows from Proposition \ref{proposition:homologyandK} and from the fact that $x\in Z_n\big(X, K(\Lambda)\big)$.
\end{proof}

In general, we could use homology with coefficients in a cancellative, zerosumfree semiring to detect directed cycles. Namely, we can prove an analogous to Proposition \ref{proposition:homologyPolygons} for homology with semiring coefficients.

\begin{proposition}\label{proposition:homologyPolygonsAppendix}
    Let $\Lambda$ be a cancellative, zerosumfree semiring. Let $X$ be a $1$-dimensional directed simplicial complex with vertex set $\{v_0,v_1,\dots,v_n\}$, $n\ge 1$, and with $1$-simplices $e_0$, $e_1,\dots, e_n$ where either $e_i = (v_i, v_{i+1})$ or $e_i = (v_{i+1}, v_i)$, $i=0,1,\dots,n$, and where $v_{n+1}=v_0$. Then, $H_1(X,\Lambda) = \Lambda$ is non-trivial if and only if either $e_i = (v_i,v_{i+1})$ for all $i$ or $e_i = (v_{i+1},v_i)$ for all $i$, and $H_1(X,\Lambda)=\{0\}$ in any other case.
\end{proposition}

\begin{proof}
    Let $[e_i]$ denote the elementary $1$-chain associated to $e_i$. Let $x = \sum_{i=0}^n \lambda_i [e_i]$ be any non-trivial $1$-cycle. We can assume without loss of generality that $\lambda_0\ne 0$. If $e_0 = (v_0,v_1)$, then $\lambda_0[v_1]$ is a non-trivial summand in $\partial^+(e_0)$. Since $\Lambda$ is zerosumfree, such summand does not have an inverse, thus $\lambda_0[v_1]$ must be a summand in $\partial^-(x)$. Now note that $e_1$ is the only other simplex involving the vertex $v_1$. Furthermore, $[v_1]$ appears in $\partial^-[e_1]$ if and only if $e_1 = (v_1,v_2)$, in which case $\partial^-[e_1]=\lambda_1 v_1$. We further deduce that $\lambda_1=\lambda_0$.
    
    By proceeding iteratively, we deduce that if $x$ is non-trivial, necessarily $e_i = (v_i, v_{i+1})$ and $\lambda_0 = \lambda_1 = \dots = \lambda_n = \lambda$, in which case $x = \sum_{i=0}^n \lambda [v_i, v_{i+1}]$. Simple computations show that such $x$ is indeed a cycle, and since there are no $2$-simplices, it cannot be homologous to any other cycle. Thus $H_1(X,\Lambda)=\Lambda$. The case where $e_0 = (v_1,v_0)$ is analogous.
\end{proof}

The question would then be how to use this information in the setting of persistent homology. Recall from Remark \ref{remark:homologyandKfunctor} that the family of canonical maps $k_{C(X,\Lambda)}\colon C_n(X,\Lambda)\to K\big(C_n(X,\Lambda)\big)$ gives rise to a morphism of $\Lambda$-semimodules $k_{C(X,\Lambda)}\colon H_n(X,\Lambda)\to H_n\big(K(C(X,\Lambda))\big)$,
which takes the class of $x$ to the class of $[x,0]$. As a consequence of Proposition \ref{proposition:homologyandK}, we can also rewrite $H_n\big(K(C(X,\Lambda))\big) = H_n\big(C(X,K(\Lambda))\big)$. Therefore, the map $k_{C(X,\Lambda)}$ above can be regarded as a morphism $k_{C(X,\Lambda)}\colon H_n(X,\Lambda)\to H_n\big(X,K(\Lambda)\big)$. We denote this map by $k_X$. We then have the following result.

\begin{proposition}\label{proposition:directedHomologyFormulations}
    Let $X$ be a directed simplicial complex and let $\Lambda\in \{\mathbb{N},\mathbb{Q}^+,\mathbb{R}^+\}$. The directed homology module introduced in Definition \ref{definition:directed-homology}, $H_n^{\Dir}\big(X,K(\Lambda)\big)$, is the submodule of $H_n\big(X,K(\Lambda)\big)$ generated by $\Im k_X$.
\end{proposition}

\begin{proof}
    Note that all representatives of classes in $H_n(X,\Lambda)$ are in $Z_n(X,\Lambda)$. Thus, the image through $k_X$ corresponds to a subset of the classes generated by positive cycles, as a consequence of Proposition \ref{proposition:directedCyclesCorrespondence}. Namely, $\Im k_X\subseteq H_n^{\Dir}\big(X, K(\Lambda)\big)$. To prove the result, it would be enough to prove that every class of a directed cycle is in $\Im k_X$. But this follows from the definition of $H_n(X,\Lambda)$.
\end{proof}

More generally, given $\Lambda$ a cancellative, zerosumfree semiring, we can define $H_n^{\Dir}\big(X,K(\Lambda)\big)$ as the submodule of $H_n\big(X,K(\Lambda)\big)$ generated by $\Im k_X$, and such module would detect directed cycles as exhibited by Proposition \ref{proposition:directedHomologyFormulations}. If $K(\Lambda)$ is a field, we can also use these homology modules to define persistence vector spaces for which we can prove stability results in a way analogous to the results in Section \ref{section:directedPersistentHomology}.

The results of this paper arose from the ideas in the constructions above, which are introduced in their full generality using homology with semiring coefficients. This shows that the idea of introducing a submodule of directed homology by taking cycles with non-negative coefficients has a deeper algebraic meaning, providing more context for the results in this paper and presenting them in their full generality. Furthermore, similar ideas could be considered for semirings which are not zerosumfree, perhaps allowing for the detection of additional information that persistent homology with ring coefficients may miss. The matter of whether a theory of persistence semimodules could provide more information could also be looked into. 


\setlength{\parindent}{0cm}

\end{document}